\newcommand{\ZZ}{\mathbb{Z}}
\newcommand{\RR}{\mathbb{R}}
\newcommand{\CC}{\mathbb{C}}
\newcommand{\QQ}{\mathbb{Q}}
\newcommand{\Qlbar}{\overline{\mathbb{Q}}_\ell}
\newcommand{\FF}{\mathbb{F}}
\newcommand{\Fq}{{\mathbb{F}_q}}
\newcommand{\Fp}{{\mathbb{F}_p}}
\newcommand{\cA}{\mathcal{A}}
\newcommand{\cC}{\mathcal{C}}
\newcommand{\cD}{\mathcal{D}}
\newcommand{\cB}{\mathcal{B}}
\newcommand{\mcD}{\mathcal{D}}
\newcommand{\bbG}{\mathbb{G}}
\newcommand{\bbL}{\mathbb{L}}
\newcommand{\btau}{{\boldsymbol{\tau}}}
\newcommand{\bbbL}{{\underline{\bbL}}}
\newcommand{\bbbT}{{\underline{\bbT}}}
\newcommand{\bbbU}{{\underline{\bbU}}}
\newcommand{\bbS}{\mathbb{S}}
\newcommand{\bbT}{\mathbb{T}}
\newcommand{\bbU}{\mathbb{U}}
\newcommand{\LLC}{\mathrm{LLC}}
\newcommand{\cE}{\mathcal{E}}
\newcommand{\cG}{\mathcal{G}}
\newcommand{\cL}{\mathcal{L}}
\newcommand{\cM}{\mathcal{M}}
\newcommand{\cP}{\mathcal{P}}
\newcommand{\cT}{\mathcal{T}}
\newcommand{\cU}{\mathcal{U}}
\newcommand{\cX}{\mathcal{X}}
\newcommand{\cY}{\mathcal{Y}}
\newcommand{\End}{\operatorname{End}}
\newcommand{\sgn}{\operatorname{sgn}}
\newcommand{\ind}{\operatorname{ind}}
\newcommand{\Frob}{\mathrm{Fr}}
\newcommand{\bG}{\mathbf{G}}
\newcommand{\bL}{\mathbf{L}}
\newcommand{\bP}{\mathbf{P}}
\newcommand{\bS}{\mathbf{S}}
\newcommand{\bT}{\mathbf{T}}
\newcommand{\bU}{\mathbf{U}}
\newcommand{\ad}{\mathrm{ad}}
\newcommand{\pr}{\operatorname{pr}}
\newcommand{\ram}{\operatorname{ram}}
\newcommand{\sconn}{\mathrm{sc}}
\newcommand{\unr}{\operatorname{ur}}
\newcommand{\ord}{\operatorname{ord}}
\newcommand{\Aut}{\operatorname{Aut}}
\newcommand{\Hom}{\operatorname{Hom}}
\newcommand{\Res}{\operatorname{Res}}
\newcommand{\ii}{\operatorname{i}}
\newcommand{\cInd}{\operatorname{c-Ind}}
\newcommand{\Nor}{\operatorname{N}}
\newcommand{\fR}{\mathfrak{R}}
\newcommand{\fB}{\mathfrak{B}}
\newcommand{\fgg}{\mathfrak{g}}
\newcommand{\Irr}{\operatorname{Irr}}
\newcommand{\Stab}{\operatorname{Stab}}
\newcommand{\fL}{\mathfrak{L}}
\newcommand{\fo}{\mathfrak{o}}
\newcommand{\fp}{\mathfrak{p}}
\newcommand{\fs}{\mathfrak{s}}
\newcommand{\fS}{\mathfrak{S}}
\newcommand{\ft}{\mathfrak{t}}
\newcommand{\lft}{\mathrm{lft}}
\newcommand{\fft}{\mathrm{ft}}
\newcommand{\bound}{\mathrm{b}}
\newcommand{\fX}{{\mathfrak{X}}}
\newcommand{\fU}{\mathfrak{U}}
\newcommand{\abel}{{\mathrm{ab}}}
\newcommand{\der}{{\mathrm{der}}}
\newcommand{\Lie}{{\mathrm{Lie}}}
\newcommand{\rN}{{\mathrm{N}}}
\newcommand{\rP}{{\mathrm{P}}}
\newcommand{\rQ}{{\mathrm{Q}}}
\newcommand{\nr}{{\mathrm{nr}}}
\newcommand{\red}{{\mathrm{red}}}
\newcommand{\res}{{\mathrm{res}}}
\newcommand{\scn}{{\mathrm{sc}}}
\newcommand{\sep}{{\mathrm{s}}}
\newcommand{\enh}{{\mathrm{e}}}
\newcommand{\depth}{{\mathrm{d}}}
\newcommand{\ordi}{{\mathrm{ord}}}
\newcommand{\bbbG}{{\underline{\mathbb{G}}}}
\newcommand{\bbbS}{{\underline{\mathbb{S}}}}
\newcommand{\rZ}{{\mathrm{Z}}}
\newcommand{\bepsilon}{{\boldsymbol{\epsilon}}}
\newcommand{\Gal}{\operatorname{Gal}}
\newcommand{\GL}{\mathrm{GL}}
\newcommand{\SL}{\operatorname{SL}}
\newcommand{\rG}{\operatorname{G}}
\newcommand{\Sp}{\operatorname{Sp}}
\newcommand{\Mat}{\operatorname{M}}
\newcommand{\asym}{\mathrm{asym}}
\newcommand{\symm}{\mathrm{sym}}
\newcommand{\scusp}{\mathrm{sc}}
\newcommand{\cusp}{\mathrm{cusp}}
\newcommand{\cus}{\mathrm{c}}
\newcommand{\superc}{\mathrm{sc}}
\newcommand{\Sc}{\mathrm{Sc}}
\newcommand{\Inn}{{\mathrm{Inn}}}
\newcommand{\InnT}{{\mathrm{InnT}}}
\newcommand{\isom}{\xrightarrow{\;\sim\;}}
\def\infl{\mathrm{infl}}
\def\rq{\mathrm{q}}
\def\fg{{\mathfrak g}}
\def\fA{{\mathfrak{A}}}
\def\fG{{\mathfrak{G}}}
\def\Unip{\mathfrak{U}}
\def\ord{\mathrm{ord}}
\def\Kal{\mathrm{Ka}}
\def\Scmap{\mathrm{Sc}}
\renewcommand{\tilde}{\widetilde}
\title[Bruhat-Tits buildings, representations  and Langlands correspondence]{Bruhat-Tits buildings, representations of $p$-adic groups and Langlands correspondence}
\author{Anne-Marie Aubert}
\address{Sorbonne Universit\'e and Universit\'e Paris Cit\'e, CNRS,
IMJ-PRG, F-75005 Paris, France}
\email{anne-marie.aubert@imj-prg.fr}
\date{\today}
\numberwithin{equation}{subsection}
\newtheorem{theorem}[equation]{Theorem}
\newtheorem{prop}[equation]{Proposition}
\newtheorem{thm}{Theorem}
\newtheorem{lem}[equation]{Lemma}
\theoremstyle{definition}
\newtheorem{defn}[equation]{Definition}
\newtheorem{remark}[equation]{Remark}
\newtheorem{example}[equation]{Example}
\begin{document}

\maketitle

\centerline{\sl To the memory of Jacques Tits with admiration}

\begin{abstract}
The Bruhat-Tits theory is a key ingredient in the construction of irreducible smooth representations of $p$-adic reductive groups. We describe generalizations to arbitrary such representations of several results recently obtained in the case of supercuspidal representations, in particular regarding the local Langlands correspondence and the internal structure of the $L$-packets.

We prove that the enhanced $L$-parameters with semisimple cuspidal support are those which are obtained via the (ordinary) Springer correspondence.  

Let  $\bG$ be a connected reductive group  over a non-archimedean field $F$ of residual characteristic $p$. In the case where $\bG$ splits over a tamely ramified extension of $F$ and $p$ does not divide the order of the Weyl group of $\bG$, we show that the enhanced $L$-parameters with semisimple cuspidal support correspond to the irreducible smooth representations of $\bG(F)$ with non-singular supercuspidal support via the local Langlands correspondence constructed by Kaletha, under the assumption that the latter satisfies certain expected properties. As a consequence, we obtain that every compound $L$-packet of $\bG(F)$ contains at least one representation with non-singular supercuspidal support.\end{abstract}

\tableofcontents

\section{Introduction}
Let $F$ be a non-archimedean local field, let $\bG$ be a connected reductive algebraic group  defined over $F$, and let $G$ denote the group of $F$-points of  $\bG$. 
A fundamental problem is the construction (and classification) of all irreducible smooth complex representations of the group $G$. A remarkable breakthrough in this direction was done by Moy and Prasad via the use of the Bruhat-Tits theory.  In \cite{BTI, BTII}, Bruhat and Tits defined an affine building $\cB(\bG,F)$ attached to $G$ on which the group $G$ acts. In particular, for each point $x$ in  $\cB(\bG,F)$, they constructed a compact subgroup $G_{x,0}$ of $G$, called a parahoric subgroup of $G$, which generalizes the notion of Iwahori subgroup. 

In \cite{Moy-PrasadI, Moy-PrasadII}, Moy and Prasad defined a filtration of these parahoric subgroups by smaller, normal in $G_{x,0}$,
subgroups $G_{x,0} \rhd G_{x,r_1} \rhd G_{x,r_2} \rhd G_{x,r_3}\rhd\cdots$, where $0 < r_1 < r_2 < r_3<\cdots$ are real numbers depending on $x$. These subgroups play a fundamental role in the study and construction of supercuspidal representations of $G$. Thanks to this filtration, they introduced  the notion of depth of a representation, which measures the first occurrence of a fixed vector in a given representation.

In \cite{Adler}, Adler used the Moy-Prasad filtration to construct certain positive-depth supercuspidal representations of $G$, in the case where $\bG$  splits over a tamely ramified extension.  His construction  was generalized by Yu in \cite{Yu}. When $\bG$  splits over a tamely ramified extension and the residual characteristic of $F$ does not divide the order of the Weyl group of $\bG$, Yu's construction gives all supercuspidal irreducible representations of $G$ (see \cite{Fintzen,Kim}). However, it was recently noticed by Spice that the proofs of two essential results in this construction, \cite[Prop.~14.1 and Th.~14.2]{Yu}, are not correct, due to the usage of a misstated lemma in a reference. Although  \cite[Th.~3.1]{Fintzen-correction} shows that Yu’s construction still produces irreducible supercuspidal representations, it appeared to be important to restore the validity of Yu's results. This was recently done in \cite[Th.~4.1.13]{FKS} by modifying Yu’s construction with a certain quadratic character.

The original Yu's method is based on the construction of very specific pairs formed by an open compact subgroup of $G$ and an irreducible smooth representation of the latter, called \textit{types} (see \cite{BKtyp}). Yu's construction of types has been generalized to not necessarily supercuspidal representations in \cite{Kim-Yu} and \cite{Fintzen}. In \S\ref{sec:types}, we
similarly extend the ``twisted Yu's construction'' of \cite{FKS}  to arbitrary irreducible smooth representations.

If  the group $\bG$ splits over a tamely ramified  extension and the residual characteristic of $F$ does not divide the order of the Weyl group of $\bG$,  for a certain class of supercuspidal irreducible representations, called ``non-singular'' (which contains the class of ``regular'' supercuspidal representations considered in \cite{Kal-reg}), Kaletha obtained a simplification of Yu's construction and constructed a Langlands correspondence in \cite{Kaletha-nonsingular}. The  $L$-packets which contain non-singular supercuspidal representations of $G$ are those attached to discrete $L$-parameters for $G$ that have trivial restriction to $\SL_2(\CC)$. These $L$-packets contain only supercuspidal representations.  

We generalize these notions to arbitrary smooth representations as follows: we call
an irreducible smooth representation $\pi$ of $G$  \textit{regular} (resp. \textit{non-singular}) if its supercuspidal support is regular (resp. non-singular). We study the local Langlands correspondence for these representations by using the approach we developed in \cite{AMS1,AMS2,AX-Hecke}.
In particular, assuming  some properties of the local Langlands correspondence (proved to be satisfied in the case of classical $p$-adic  groups in \cite{AMS4}, in the case of the group $\rG_2$ in \cite{AX-G2}, and,  for $G$ arbitrary, in the case where $L$ a torus in \cite{ABPS-disc, Sol-ppal}),  we obtain  in Theorem~\ref{thm:main}, for every $L$-parameter $\varphi$,  a description of the equivalence classes of the irreducible non-singular representations in a compound  $L$-packet $\Pi_\varphi$ via the Springer correspondence of a (possibly disconnected) reductive complex group attached to $\varphi$. As a consequence, we prove that, if $G$ is quasi-split, then any $L$-packet for $G$ contains a non-singular representation.

\medskip
\noindent
{\bf Acknowledgements.} We would like to thank the referees for their thourough reports and their
suggestions for improvements. We are also grateful to Alexandre Afgoustidis, Alexander Bertoloni-Meli, Arnaud Mayeux, and Masao Oi for several
helpful comments and discussions.

\bigskip

\noindent
{\bf Notations.} Let $F$ be a non-archimedean local field. Let $\fo_F$ denote the ring of integers of $F$ and  $\fp_F$ the maximal ideal in $\fo_F$. There is in $\fo_F$ a prime element $\varpi_F$ such that $\fp_F=\varpi_F\fo_F$. 
Let $k_F:=\fo_F/\fp_F$ be the residue field of $F$. Then $k_F$ is finite, and we denote by $p$ its characteristic and by $q$ its cardinality. 
We fix a separable closure $F_\sep$ of $F$, and denote by $\Gamma_F:=\Gal(F_\sep/F)$ the absolute Galois group of $F$. We set $\Sigma_F:=\Gamma_F\times\{\pm 1\}$. We denote by $W_F$  the absolute Weil group of $F$ and by $I_F$ (resp $P_F)$  the inertia (resp. wild inertia) subgroup. 
Subextensions of $F_\sep/F$ are always endowed with the valuation extending the valuation $\ordi$ on $F $. We set $|a|_F:=q^{-\ordi(a)}$ for any $a\in F^\times$ and $|0|_F:=0$.  We fix an additive character $\psi\colon F\to\CC^\times$ of $F$ of conductor $\fp_F$.

Let $F_{\unr}$ denote the maximal unramified extension of $F$ within $F_\sep$. The residue field of $F_{\unr}$ is an algebraic closure $\overline k_F$ of $k_F$.  We denote by $\Frob$ the element of $\Gal(F_{\unr}/F)$  that induces the automorphism $\overline a\mapsto \overline a^q$ on  $\overline k_F$. 

Let  $\bG$ a connected reductive algebraic group defined over $F$, and let $G$ be the group of $F$-rational points of $\bG$.  
We denote by $\bG_\ad:=\bG/\rZ_{\bG}$ the adjoint quotient of $\bG$, by  $\bG_\der$ the  derived subgroup of $\bG$, and by $\bG_{\abel}:=\bG/\bG_{\der}$ the maximal abelian quotient of $\bG$. Let $\bG_\scn$ denote the simply-connected cover of $\bG_\der$, and let $\rZ_{\bG}$ be the center of $\bG$. Let $\bL$ be a Levi subgroup of $\bG$. We write $\bL_\scn$ for the pre-image of $\bL$ in $\bG_\scn$, and $\bL_\ad$ for the image of $\bL$ in $\bG_\ad$, and we write $\bL_{\scn,\abel}:=\bL_\scn/\bL_{\scn,\der}$ for the abelianization  of $\bL_\scn$. 

If $\bT$ is a maximal torus in $\bG$, which is defined over $F$. We denote by $X(\bT)$ the group of algebraic group
homomorphisms $\bT\to\bbG_m$ and  by $R(\bG,\bT)\subset X(\bT)$  the root system of $\bG$ with respect to $\bT$. Let $\rQ(\bG,\bT)$ be
the $\ZZ$-sublattice of $X(\bT)$ generated by $R(\bG,\bT)$.  Let $R(\bG,\bT)^\vee$ the set of coroots. We write
\[\rP(\bG,\bT):=\left\{\xi\in X(\bT) \,:\,\langle \xi,\alpha^\vee\rangle\in\ZZ,\;\;\forall \alpha^\vee\in R(\bG,\bT)^\vee\right\}\] for  the lattice of weights.
Following \cite[VI.\S1.9]{Bourbaki}, we call \textit{connection index} the index of the root lattice in the weight lattice, that is the cardinality of  $\rP(\bG,\bT)/\rQ(\bG,\bT)$.  Recall that a prime is said to be \textit{good} (or \textit{not bad}) \textit{for $\bG$}  if it does not divide the coefficients of the highest
root of $R(\bG,\bT)$ (see \cite[\S4.3]{SpSt}). If a good prime for $\bG$ does not divide the connection index, it is said to be \textit{very good for $\bG$} (see for instance \cite[\S1]{Letellier}).

If $\bL$ is a connected reductive $F$-subgroup of $\bG$ containing $\bT$, we set 
\begin{equation} \label{eqn:RR}
R(\bG|\bL,\bT):=R(\bG,\bT)\backslash R(\bL,\bT).
\end{equation} 
The sets $R(\bG,\bT)$, $R(\bL,\bT)$ and $R(\bG|\bL,\bT)$ are equipped with a $\Gamma_F$-action.  

For each $\alpha\in R(\bG,\bT)$, we denote by $\Gamma_\alpha$ (resp. $\Gamma_{\pm\alpha}$) the  stabilizer of $\alpha$ (resp. $\{\pm\alpha\}$) in $\Gamma_F$. Let $F_{\alpha}$ (resp. $F_{{\pm\alpha}}$) be the subfield of $F_\sep$ fixed by $\Gamma_\alpha$ (resp. $\Gamma_{\pm\alpha}$):
\begin{equation}  \label{eqn:fields}
F\subset F_{\pm \alpha}\subset F_{\alpha}\quad\longleftrightarrow\quad\Gamma_F\supset \Gamma_{{\pm \alpha}}\supset \Gamma_{{\alpha}}.
\end{equation}
We recall the following terminology of \cite{AS}: 
\begin{itemize}
\item
When $F_{\alpha}=F_{\pm\alpha}$, we say that $\alpha$ is an asymmetric root.
\item
When $F_{\alpha}\ne F_{\pm\alpha}$, we say that $\alpha$  is a symmetric root. Note that, in this case,
the extension $F_{\alpha}/F_{\pm\alpha}$ is necessarily quadratic. Furthermore,
\begin{itemize}
\item[--]
when $F_{\alpha}/F_{\pm\alpha}$ is unramified, we say that $\alpha$ is symmetric unramified, 
\item[--]
when $F_{\alpha}/F_{\pm\alpha}$ is ramified, we say $\alpha$ is symmetric ramified.
\end{itemize}
\end{itemize}

Let $k_\alpha$ (resp. $k_{\pm\alpha}$) denote the residue fiel of $F_{\alpha}$ (resp. $F_{\pm\alpha}$), let $\rN_{k_\alpha|k_F}\colon k_\alpha^\times\to k_F^\times$ be the norm map, and let $k_\alpha^1$ be the kernel of $\rN_{k_\alpha|k_F}$.
If $p$ is odd, the groups $k_\alpha^\times$ and $k_\alpha^1$ are finite cyclic of even order and we denote by $\sgn_{k_\alpha}$ and $\sgn_{k_\alpha^1}$ the corresponding unique non-trivial $\{\pm 1\}$-valued characters.

For any maximal $F$-split torus $\bT$, $R(\bG,\bT,F)$ denotes the corresponding set of roots in $\bG$. For $\alpha\in R(\bG,\bT,F)$, let $\bU_\alpha$ be the root subgroup corresponding to $\alpha$.

From now on, we suppose that the group $\bG$ splits over a tamely ramified extension of $F$, and that $p$ is very good for $\bG$.

\section{Bruhat-Tits buildings} \label{sec:buildings}
In  \cite{BTI,BTII}, Bruhat and Tits defined a building  associated to  the $p$-adic group $G=\bG(F)$ on which the group $G$ acts. Let $\cB(\bG/\rZ_{\bG},F)$ denote the Bruhat and Tits building of $(\bG/\rZ_{\bG})(F)$. The enlarged building of $G$ is defined to be
\begin{equation}
\cB(\bG,F):=\cB(\bG/\rZ_{\bG},F)\times X_*(\rZ_{G})\otimes_\ZZ\RR,
\end{equation}
where $X_*(\rZ_G)$ is the set of $F$-algebraic cocharacters of $\rZ_G$. 
For each point $x$ in $\cB(\bG,F)$, they constructed a compact subgroup $G_{x,0}$ of $G$, that they called a \textit{parahoric subgroup}. 
Let $G_{x,0^+}$ denote the pro-$p$ unipotent radical of $G_{x,0}$, and $\bbG_{x,0}$ the quotient $G_{x,0}/G_{x,0+}$, (the points of) a connected reductive group over the residue field of $F$. We denote by $[x]$ the image of $x$ in the reduced Bruhat–Tits building and we write $G_{[x]}$ for the stabilizer of $[x$] under the action $G$ on the reduced  building. The group $G_{[x]}$ coincides with the normalizer in $G$  of $G_{x,0}$. Indeed, if $gG_{x,0}g^{-1}$ (which is equal to $G_{g\cdot x,0}$)  is equal to $G_{x,0}$, then we have $g.[x]=[x]$ by \cite[proof of 5.2.8]{BTII}. 

As shown in \cite[\S5]{BTII}, the point $x$ specifies a smooth connected affine $\fo_F$-group scheme $\fG^\circ_x$ with $\fG_x^\circ(F)=\bG(F)$ and $\fG_x^\circ(\fo_F) = G_{x,0}$. We denote by $\bbbG_{x,0}$ the reductive quotient of the special fiber of $\fG^\circ_x$. We have $\bbbG_{x,0}(k_F)=\bbG_{x,0}$\footnote{\textcolor{red}{V\'erifier!}}.
We further have an $\fo_F$-group scheme $\fG_x$ with $\fG_x(F)=G$ and $\fG_x(\fo_F)=\Stab_{G_1}(x)$, 
where $G_1$ denotes the intersection of the kernels of the group homomorphisms $\ordi\circ\chi\colon G\to\ZZ$ for all $F$-rational characters $\chi\colon G\to\bbG_m$. We  denote by $\bbbG_x,$ the quotient of the special fiber of this group scheme by its maximal connected normal unipotent subgroup. Then $\fG^\circ_x$ coincides with the neutral connected component of $\fG_x$. In particular, $\bbbG_x$ is a (possibly disconnected) algebraic group over $k_F$ with reductive neutral connected component. We have $\bbbG_x(k_F)=\bbG_x$.

In \cite{Moy-PrasadI,Moy-PrasadII}, Moy and Prasad defined a filtration of these parahoric subgroups by normal in $G_{x,0}$ subgroups
\begin{equation}
G_{x,0} \rhd G_{x,r_1} \rhd G_{x,r_2} \rhd G_{x,r_3}\rhd\cdots,
\end{equation}
that are normal inside each other and whose intersection is trivial, where $0 < r_1 < r_2 < r_3<\cdots$ are real numbers depending on $x$. For $r\in\RR_{\ge 0}$, we write \begin{equation}
G_{x,r+}:= \bigcup_{t>r} G_{x,t}.
\end{equation} 
When $r>0$, the quotient $G_{x,r}/G_{x,r+}$ is abelian and can be identified with a $k_F$-vector space.
Moy and Prasad defined also filtration submodules $\fg_{x,r}$  of $\fg:=\Lie(\bG)(F)$  and $\fg^*_{x,r}$ 
of the linear dual $\fg^*$ of $\fg$, for $r\in\RR$. We write $\fg_{x,r+}:=\bigcup_{t>r}\fg_{x,t}$. 

The $\fg_{x,r}$ were used in \cite{Moy-PrasadI} to define the notion of depth of a representation. 
We recall it below, following  \cite{DB} in which the reliance on $\fg$ has been removed from the definition. Let $(\pi,V )$ be a smooth representation of $G$. By \cite[Lemma~5.2.1]{DB}, there exists  $\depth(\pi)\in\QQ$ with the following properties.
\begin{enumerate} 
\item[(1)] If $(x,r) \in \cB(\bG,F) \times\RR_{\ge 0}$ such that $V^{G_{x,r+}}$ is nontrivial, then $r\ge \depth(\pi)$.
\item[(2)] There exists a $x'\in\cB(\bG,F)$ such that $V^{G_{x',\depth(\pi)+}}$ is nontrivial.
\end{enumerate}
Then the  \textit{depth of $\pi$}  is defined to be the least nonnegative real number $\depth(\pi)$ for which there exists an $x\in \cB(\bG,F)$ 
such that $V^{G_{x,\depth(\pi)+}}$ is nontrivial.

Note that in \cite{Yu-models}, different filtrations where introduced. It could be possible to define a notion of depth using these filtrations instead. However, as shown in \cite[\S8.1]{Yu-models} or \cite[\S13]{KP}, in the case of quasi-split groups that split over a tamely ramified extension, both notions of filtrations coincide. Related filtrations by affinoid groups, in the Berkovich analytification of a connected reductive group, have been defined in \cite{Ma} (see also \cite{RTW} and the references there).

Let $\bS$ be a torus of $\bG$ defined over $F$. The topological group $S:=\bS(F)$ has a unique maximal bounded subgroup $S_\bound$ (which is also the unique maximal compact subgroup, as $F$ is locally compact) and this subgroup is equipped with a decreasing filtration $S_r$ indexed by the non-negative real numbers, namely the Moy-Prasad filtration corresponding to the unique point in the reduced Bruhat-Tits building of $\bS$. The torus $\bS$ possesses an lft-N\'eron model $\fS^{\lft}$ by \cite[\S10]{BLR}. This is a smooth group scheme over $\fo_F$ satisfying a certain universal property. It is locally of finite type and the maximal subgroup-scheme of finite type is called the ft-N\'eron model $\fS^\fft$. 
Both models share the same neutral connected component $\fS^\circ$, called the connected N\'eron model of $\bS$. We have 
\begin{equation} \label{eqn:S0}
S_0=\fS^\circ(\fo_F)\quad\text{and}\quad S_\bound=\fS^\fft(\fo_F).
\end{equation}
The quotient $S/S_\bound$ is a finitely generated free abelian group, and, by \cite[\S3.2]{Moy-PrasadII}, \cite[\S4.2]{Yu-models}, 
for each positive real number $r$ we have 
\begin{equation} \label{eqn:Sr}
S_r= \left\{s \in S_0\,:\, \ordi(\chi(s)-1)\ge r\text{ for all $\chi\in X^*(\bS)$}\right\}.
\end{equation}
We set $S_{r+}:=\bigcup_{t>r}S_t$. The group $S_{0+}$ is the pro-$p$-Sylow of $S_0$. Let $\bT$ be the maximal unramified subtorus of $\bS$.  By \cite[Lem.3.1.16]{Kal-reg}, we have 
\begin{equation} \label{eqn:bbT}
\bbT:=S_0/S_{0+}\simeq T_0/T_{0+}.
\end{equation}

Let $E/F$ be a finite extension. By a \textit{twisted $E$-Levi subgroup of $\bG$}, we mean an $F$-subgroup $\bG'$ of $\bG$ such that $\bG'\otimes_F E$ is an $E$-rational  Levi subgroup  of an $E$-rational parabolic subgroup of $\bG\otimes_F E$. If $E/F$ is tamely ramified, then $\bG'$ is called a \textit{tamely ramified twisted Levi subgroup of $\bG$}. 
Recall that when $\bG'$ is a tamely ramified twisted Levi subgroup of $\bG$, there is a family of natural embeddings of $\cB(\bG',F)$ into $\cB(\bG,F)$.

Let  $\bG'$ be a tamely ramified twisted Levi subgroup of $\bG$, and denote by $(\Lie^*(\bG'))^{\bG'}$ the subscheme of $\Lie^*(\bG')$ that is fixed by the dual of the
adjoint action of $\bG'$. Let $Y\in(\Lie^*(\bG'))^{\bG'}$. Recall that $(\Lie^*\bT )\otimes F_\sep$ can be identified with $X(\bT)\otimes_\ZZ F_\sep$.  Therefore, we can view $\varpi_r  Y\in\Lie^*(\rZ_{\bG'}\otimes F_\sep)^\circ\subset \Lie^*(\bT \otimes F_\sep)$ as an element of $X(\bT)\otimes_\ZZ \fo_{F_\sep}$, where $\varpi_r$  is an element of $F_\sep^\times$ of valuation $r$. The residue class $\overline Y$ of $\varpi_r Y$ is an element of $X(\bT)\otimes_\ZZ k_{F_\sep}$. Then $\overline Y$ is well defined up to a multiplicative constant in $k_{F_\sep}^\times$.

Following \cite[\S8]{Yu}, as corrected in \cite[Rem.4.1.3]{FKS}, as in \cite[Def.~2.1]{Fintzen-Michigan},  we say that an element $Y$ of $(\Lie^*(\bG'))^{\bG'}$  is \textit{$\bG$-generic of depth $r\in\RR$}  if the following conditions hold:
\begin{enumerate}
\item for some  (equivalently, any) point $x\in\cB(\bG',F)$, we have $Y\in\Lie^*(\bG')_{x,-r}$;
\item $\ordi(Y(H_\alpha))=-r$ for all $\alpha\in R(\bG|\bG',\bT)$ for
some maximal torus $\bT$ of $\bG'$, where $H_\alpha:=\Lie(\alpha^\vee)(1)$ with $\alpha^\vee$ the coroot of $\alpha$.
\item the subgroup of $W$ fixing $\overline Y$ is precisely the Weyl group of $R(\bG',T,F_\sep)$.
\end{enumerate}
As observed in \cite[Lem.~8.1] {Yu}, (2) implies (3) if $p$ is not a torsion prime for the dual root datum of $\bG$.

Let $x\in\cB(\bG',F)$ and $r\in\RR_{>0}$. A character $\phi$ of $G'$ is called \textit{$\bG$-generic}  of depth $r$ relative to $x$ if $\phi$ is trivial on $G'_{x,r+}$, non-trivial on $G'_{x,r}$, and the restriction of $\phi$ to $G'_{x,r}/G'_{x,r+}\simeq\fg_{x,r}' /\fg'_{x,r+}$ is given by $\phi\circ Y$ for some generic element $Y\in(\Lie^*(\bG'))^{\bG'}$ of depth $r$.

Let $\bL$ be an $F$-rational Levi subgroup of an $F$-rational parabolic subgroup of $\bG$. 
Let $\bT$ be an elliptic maximally unramified maximal torus of $\bL$, and let $y\in\cB(\bL,F)$ be the point associated to $\bS$. It is a vertex \cite[Lem.3.4.3]{Kal-reg}. 
If $\alpha\in R(\bL,\bT)$, we set
\begin{equation} \label{eqn:ordy}
\ordi_y(\alpha):=\left\{t\in\RR\,:\,\fgg_\alpha(F_\alpha)_{y,t+}\subsetneq\fgg_\alpha(F_\alpha)_{y,t}\right\}.
\end{equation}
An element $X\in \Lie(\bL_{\scn,\abel})^*(F)\subset \Lie(\bL_\scn)^*(F)$ is called \textit{$\bG$-good} if there exists $r\in\RR$ such that for all tamely ramified maximal tori $\bT \subset\bL$ and all $\alpha\in R(\bG|\bL,\bT)$ we have $\ordi(\langle X,H_\alpha\rangle)=-r$.

Let  $\vv \bG=(\bG^0\subset\bG^1\subset\cdots\subset\bG^d=\bG)$ be a \textit{tamely ramified twisted Levi sequence in $\bG$}, that is,  a finite sequence  of twisted $E$-Levi subgroups of $\bG$, with $E/F$ tamely ramified. 
Let $\bL^0$ be a Levi subgroup of $\bG^0$. We denote by $A_{\bL^0}$ the maximal $F$-split torus of the center $\rZ_{\bL^0}$ of $L^0$. For each $i\in\{1,\ldots,d\}$, we define $\bL^i$ to be  the centralizer of $A_{\bL^0}$ in $\bG^i$, it is a Levi subgroup of $\bG^i$. The 
sequence  $\vv \bL(\vv \bG):=(\bL^0,\cdots,\bL^d)$ is a tamely ramified generalized twisted Levi sequence in $\bL$ and $\rZ_{\bL^0}/\rZ_{\bL^d}$ is $F$-anisotropic (see \cite[Lem.~2.4]{Kim-Yu}).

Folowing \cite[Def.~3.2]{Kim-Yu}, we say that an embedding $\iota\colon\cB(\bL,F)\hookrightarrow\cB(\bG,F)$  is \textit{$(y,r)$-generic} if $\bU_{\alpha,\iota(y),r}=\bU_{\alpha,\iota(y),r+}$ for all $\alpha\in R(\bG|\bL,\bS,F)$ where $\bS$ is any maximal $F$-split torus of $\bL$ such that $y\in\cA(\bL,\bT,F)$. Here $\cA(\bL,\bT,F)$ is the apartment associated to $\bS$ in $\cB(\bL,F)$ and $(\bU_{\alpha,\iota(y),r})_{r\in\RR}$ is the filtration on the root group $\bU_\alpha$, $\alpha\in R(\bG,\bS,F)$ so that
$\bU_{\alpha,\iota(y),r}=\bU_\alpha\cap\bG_{\iota(y),r}$, where $(\bG_{\iota(y),r})_{r\in\RR}$ is the Moy-Prasad filtration.

Let $\{\iota\}$ be a commutative diagram of embeddings:
\begin{equation}
\begin{tikzcd}
\cB(\bG^0,F)\arrow[hookrightarrow]{r}{\iota}&\cB(\bG^1,F)\arrow[hookrightarrow]{r}{\iota}&\cdots\arrow[hookrightarrow]{r}{\iota}&\cB(\bG^d,F)\\
\cB(\bL^0,F)\arrow[hookrightarrow]{r}{\iota} \arrow[hookrightarrow]{u}{\iota}& \cB(\bL^1,F)\arrow[hookrightarrow]{r}{\iota}\arrow[hookrightarrow]{u}{\iota}&\cdots\arrow[hookrightarrow]{r}{\iota}&\cB(\bL^d,F))\arrow[hookrightarrow]{u}{\iota}
\end{tikzcd}.
\end{equation}
We denote by $\iota$ any composite embedding in this diagram from $\cB(\bL^i,F)$ to $\cB(\bL^j,F)$ or $\cB(\bG^j,F)$, and from $\cB(\bG^i,F)$ to $\cB(\bG^j,F)$ for $i\le j$. 

The image of a point $y\in \cB(\bL^0,F)$ in $\cB(\bL^i,F)$ is uniquely determined, modulo the translation of $A_{\bL^0}$.
To specify a diagram of embeddings as above,  we need to give  the image  of $y$ in $\cB(\bG^i,F)$ for $0\le i\le d$.
We have the disjoint-union decomposition 
\begin{equation}
    R(\bL^i|\bL^{i-1},\bT)=R(\bL^i|\bL^{i-1},\bT)_\asym\sqcup R(\bL^i|\bL^{i-1},\bT)_\symm
\end{equation}
into the subsets of asymmetric, respectively symmetric, elements.
We suppose there exists and fix a character $ \phi$ of $L^0$ that is $\bL$-generic relative to $y$ of depth $r$ for some non-negative real number $r$ in the sense of \cite[p.599]{Yu}, and put $s=r/2$. To the character $ \phi$ one can associate a generic element of the dual Lie algebra of $L^0$. 

For $\alpha\in R(\bL^i|\bL^{i-1},\bT)$, denote by $\alpha_0$ the restriction of $\alpha$ to $\rZ_{\bL^0}$, and by $e_{\alpha}$ (resp. $e_{\alpha_0}$)
the ramification degree of the extensions $F_\alpha/F$ (resp. $F_{\alpha_0}/F$). We set $e(\alpha/\alpha_0):=e_\alpha/e_{\alpha_0}$. Let $\ell_{p'}(\alpha^\vee)$ denote the prime-to-$p$ part of the normalized square length of $\alpha^\vee$, and let $f_{(\bL,\bT)}(\alpha)$ be the toral invariant from \cite[\S4.1]{Kaletha-epipelagic}. Let $\Sigma_F$ act on $F_\sep$ with $\{\pm 1\}$ acting trivially. 
For $t\in\bbT$, we write
\[
\epsilon_{\natural,y}^{\bL^i|\bL^{i-1}}(t):=\prod_{\begin{smallmatrix}\alpha\in R(\bL^i|\bL^{i-1},\bT)_\asym/\Sigma_F\cr s\in\ordi_y(\alpha)\end{smallmatrix}}\sgn_{k_\alpha}(\alpha(t))\,\cdot\,
\prod_{\begin{smallmatrix}\alpha\in R(\bL^i|\bL^{i-1},\bT)_{\symm,\unr}/\Gamma_F\cr s\in\ordi_y(\alpha)\end{smallmatrix}}\sgn_{k_\alpha^1}(\alpha(t));
\]
\[
\epsilon_{\flat,0}^{\bL^i|\bL^{i-1}}(t):=\prod_{\begin{smallmatrix}\alpha\in R(\bL^i|\bL^{i-1},\bT)_\asym/\Sigma_F\cr
\alpha_0\in R(\bL^i|\bL^{i-1},\rZ_{\bL^0})_{\symm,\unr}\cr 2\nmid e(\alpha/\alpha_0)
\end{smallmatrix}}\sgn_{k_\alpha}\alpha(t)\,\cdot\,
\prod_{\begin{smallmatrix}\alpha\in R(\bL^i|\bL^{i-1},\bT)_{\symm,\ram}/\Gamma_F\cr
\alpha_0\in R(\bL^i|\bL^{i-1},\rZ_{\bL^0})_{\symm,\unr}\cr 2\nmid e(\alpha|\alpha_0)\end{smallmatrix}}\sgn_{k_\alpha^1}\alpha(t);
\]
\[
\epsilon_{\flat,1}^{\bL^i|\bL^{i-1}}(t):=\prod_{\begin{smallmatrix}
\alpha\in R(\bL^i|\bL^{i-1},\bT)_{\symm,\ram}/\Gamma_F\cr
\alpha(t)\in -1+\fp_\alpha\end{smallmatrix}}(-1)^{[k_\alpha:k_F]+1}\sgn_{k_\alpha}(e_\alpha\ell_{p'}(\alpha^\vee));
\]
\[
\epsilon_{\flat,2}^{\bL^i|\bL^{i-1}}(t):=\prod_{\begin{smallmatrix}
\alpha\in R(\bL^i|\bL^{i-1},\bT)_{\symm,\ram}/\Gamma_F\cr
\alpha(t)\in -1+\fp_\alpha\end{smallmatrix}}\sgn_{k_\alpha}((-1)^{\frac{(e(\alpha/\alpha_0)-1)}{2}});
\]
\begin{equation}\epsilon_{\flat,0}^{\bL^i|\bL^{i-1}}:=\epsilon_{\flat,0}^{\bL^i|\bL^{i-1}}\cdot \epsilon_{\flat,1}^{\bL^i|\bL^{i-1}}\cdot\epsilon_{\flat,2}^{\bL^i|\bL^{i-1}};
\end{equation}
\begin{equation}
\epsilon_{f}^{\bL^i|\bL^{i-1}}(t):=\prod_{\begin{smallmatrix}
\alpha\in R(\bL^i|\bL^{i-1},\bT)_{\symm,\ram}/\Gamma_F\cr
\alpha(t)\in -1+\fp_\alpha\end{smallmatrix}}f_{(\bL,\bT)}(\alpha).
\end{equation}

By \cite[Th.~3.4 and Cor.~3.6]{FKS}, there exists a unique character 
\begin{equation} \label{eqn:eps-first}
\epsilon_y^{\bL^i|\bL^{i-1}}\colon \bbL^{i-1}_y\to\{\pm 1\}
\end{equation} 
satisfying the following
property: for every tamely ramified maximal torus $\bT$ of $\bL^{i-1}$ such that $y\in \cB(\bT,F)$ the restriction of $\epsilon_y^{\bL^i|\bL^{i-1}}$
to $\bbT$ equals $\epsilon_{\natural,y}^{\bL^i|\bL^{i-1}}\cdot\epsilon_\flat^{\bL^i|\bL^{i-1}}\cdot\epsilon_f^{\bL^i|\bL^{i-1}}$.
By inflation we consider $\epsilon_y^{\bL^i|\bL^{i-1}}$ as a character of $L_y^{i-1}$ when convenient.
For every $i\in\{1,\ldots,d\}$, we define
\begin{equation} \label{eqn:sign}
\epsilon_y^i:=\prod_{j=1}^i\epsilon_y^{\bL^j|\bL^{j-1}}, \quad\text{and we write}\quad \epsilon_y:=\epsilon_y^d.
\end{equation}

\section{Theory of types} \label{sec:types}

\subsection{Construction of types} 
\label{subsec:review}

\begin{defn}  
A \textit{depth-zero $\bG$-datum} is a triple $((\bG,\bL),(y,\iota),\btau_L)$ satisfying the following
\begin{itemize}
\item[(1)] $\bG$ is a connected reductive group over $F$, and $\bL$ is a Levi subgroup of $\bG$;
\item[(2)] $y$ is a point in $\cB(\bL,F)$ such that $L_{y,0}$ is a maximal parahoric subgroup of $L$, and $\iota\colon\cB(\bL,F)\hookrightarrow\cB(\bG,F)$ is a $(y,0)$-generic embedding;
\item[(3)]  $\btau_L$ is an irreducible smooth representation of $L_y$ such that $\btau_L|L_{y,0}$ contains the inflation to $L_{y,0}$ of a cuspidal representation of $\bbL_{y,0}$.
\end{itemize}
\end{defn}

\begin{defn} \label{defn:datum}
A \textit{$\bG$-datum} is a $5$-tuple 
\begin{equation} \label{eqn:Gdatum}
\mcD=((\vv \bG,\bL^0),(y,\{\iota\}),\vv r,\btau_{L^0},\vv \phi)
\end{equation}
satisfying the following:

\begin{itemize}
\item[\bf D1.] $\vv \bG=(\bG^0,\bG^1,\cdots, \bG^d)$ is a tamely ramified twisted Levi sequence in $\bG$, and $\bL^0$ a 
Levi subgroup of $\bG^0$. Let $\vv \bL$ be associated to $\vv \bG$ as above;
\item[\bf D2.] $y$ is a point in $\cB(\bL^0,F)$, and $\{\iota\}$ is a commutative diagram of $\vv s$-generic embeddings of buildings relative to $y$, where $\vv s=(0,r_0/2,\cdots,r_{d-1}/2)$;
\item[\bf D3.] $\vv r=(r_0,r_1,\cdots,r_d)$ is a sequence of real numbers satisfying $0<r_0<r_1<\cdots<r_{d-1}\le r_d$ if $d>0$, and $0\le r_0$ if $d=0$;
\item[\bf D4.] $\mcD^0:=((\bG^0,\bL^0),(y,\iota), \btau_{L^0})$ is a depth zero $\bG^0$-datum;
\item[\bf D5.] $\vv \phi=(\phi_0, \phi_1,\cdots, \phi_d)$ is a sequence of quasi-characters, where $ \phi_i$ is a quasi-character of $G^i$ such that $ \phi_i$ is
$G^{i+1}$-generic of depth $r_i$ relative to $y$ for all $y\in\cB(\bG^i,F)$.
\end{itemize}
\end{defn}

For $i\in\{0,\ldots,d\}$, we write $s_i:=r_i/2$. We set $s_{-1}:=0$. For $i\in\{1,\ldots,d\}$ and $t=s_{i-1}$ or $t= s_{i-1}+$, we denote by $(\bG^{i-1},\bG^i)(F)_{\iota(y),(r_{i-1},t)}$ the group
\[G^{i-1}\cap \langle\bT(E)_{r_{i-1}}, \bU_\alpha(E)_{\iota(y),r_{i-1}},\bU_\beta(E)_{\iota(y),t}\;:\;\alpha\in R(\bG^i,\bT), \beta\in R(\bG^{i-1}|\bG^i,\bT)\rangle,\]
where $\bT$ is a maximal torus of $\bG^i$ that splits over a tamely ramified extension $E/F$ with $\iota(y)\in\cB(\bT,F)\subset\cB(\bG^{i},F)$, and $\bU_\alpha(E)_{\iota(y),r_{i-1}}$ denotes the 
Moy-Prasad filtration subgroup of depth $r_{i-1}$ at $y$ of the root group $\bU_\alpha(E)\subset \bG^{i-1}(E)$ corresponding to the root $\alpha$, and similarly for $\bU_\beta(E)_{\iota(y),t}$. We set
\[J^i:=(\bG^{i-1},\bG^i)(F)_{\iota(y),(r_{i-1},s_{i-1})}\quad\text{and}\quad J^i_+:=(\bG^{i-1},\bG^i)(F)_{\iota(y),(r_{i-1},s_{i-1}+)}.\]
\noindent
{\bf The construction.}
Let $\mcD=((\vv \bG,\bL^0),(y,\{\iota\}),\vv r,\btau_{L^0},\vv \phi)$ be  a $\bG$-datum.  Let $K_{G^0}$ denote the group generated by $L^0_y$ and $G_{\iota(y),0}^0$.
For $0\le i\le d$, as in \cite[\S7.4]{Kim-Yu}, we define 
\begin{equation} \label{eqn:Kii}
\begin{cases}
K^i:=K_{G^0}G^1_{\iota(y),s_0}\cdots G^i_{\iota(y),s_{i-1}}\cr K^i_+:=G_{\iota(y),0+}^0G^1_{\iota(y),s_0+}\cdots G^i_{\iota(y),s_{i-1}+}.
\end{cases}
\end{equation}
Inductively, we show that $K^i$ and $K^i_+$ are groups. For $i=0$, we have $K^0:=K_{G^0}$ and $K^0_+:=G_{\iota(y),0+}^0$. For $i>0$, $K_{G^0}G^1_{\iota(y),s_0}\cdots G^{i-1}_{\iota(y),s_{i-2}}$ is a group by our induction hypothesis. It is clearly a subgroup of $G^i_{\iota(y),0}$. Since $G^i_{\iota(y),0}$ normalizes
$K_{G^0}$, $G^1_{\iota(y),s_0}$, $\ldots$, $G^{i-1}_{\iota(y),s_{i-2}}$, we see that  $K_{G^0}G^1_{\iota(y),s_0}\cdots G^i_{\iota(y),s_{i-1}}$ is a group. The proof that $K^i_+$ is a group is analogous. 

By \cite[Prop.~4.3(b)]{Kim-Yu}, we have
\begin{equation} \label{eqn:rqt}
    K_{G^0}/G^0_{\iota(y),0+}\overset{\rq}{\simeq} L^0_y/L_{y,0+}^0.
\end{equation}
We define $\btau_{\mcD^0}$ to be the representation of $K_{G^0}$ obtained by composing the isomorphism \eqref{eqn:rqt} with $\btau_{L^0}$.

Following similar lines as in \cite{Yu,Kim-Yu}, we construct, for $0\le i\le d$,  irreducible representations $\btau_i$ and $\btau_i'$ of $K^i$ inductively as follows. First we set $\btau_0':=\btau_{\mcD^0}$ and $\btau_0:=\btau_0'\otimes \phi_0$.  
Let $i\in\{1,\ldots,d\}$.
We suppose that $\btau_{i-1}$ and $\btau_{i-1}'$ are already constructed, and that $\btau_{i-1}'|_{G^{i-1}_{\iota(y),r_{i-1}}}$ is $\mathbf{1}$-isotypic.
Let $\hat \phi_{i-1}$  be the extension of $ \phi_{i-1}|_{K_{G^0}G^{i-1}_{\iota(y),0}}$ to $K_{G^0}G^{i-1}_{\iota(y),0}G_{y,s_{i-1}+}$  defined as in \cite[\S4]{Yu}.

Since every element of $J^i/J^i_+$ has order dividing $p$, we can view $J^i/J^i_+$ as an $\Fp$-vector space. 
We define a pairing $\langle\,,\,\rangle$ on $J^i/J^i_+$ by
\begin{equation}  \label{eqn:pairing}
\langle j_1,j_2\rangle:=\hat \phi_{i-1}(j_1j_2j_1^{-1}j_2^{-1})=[j_1,j_2].
\end{equation}
By  \cite[Prop.~6.4.44]{BTI}, it is well-defined, since $[J^i,J^i]$ is contained in $J^i_+$, and by \cite[Lem.~11.1]{Yu}, it is non-degenerate. For any $j\in J_+^i$, the kernel of $\hat \phi_{i-1}$ contains $j^p$. Hence the order of every element of $\hat \phi_{i-1}(J^i_+)$
divides $p$. 
Since $\hat \phi_{i-1}(J^i_+)$ is a nontrivial subgroup of $\CC^\times$, this shows that $\hat \phi_{i-1}(J^i_+)$ is isomorphic to $\Fp$. Thus we can view $\langle\,,\,\rangle$ as a non-degenerate symplectic form on the $\Fp$-vector space $V_i:=J^i/J^i_+$. Let $J^i_{+}(\hat \phi_{i-1})$ denote the intersection of $J^i_+$ with the kernel of $\hat \phi_{i-1}$. From \cite[Prop.~11.4]{Yu}  there is a canonical isomorphism
\begin{equation} \label{eqn:Yuiso}
J^i/J^i_{+}(\hat \phi_{i-1})\longrightarrow V_i\times (J^i_+/J^i_{+}(\hat \phi_{i-1}))\simeq H(V_i),
\end{equation}
where $H(V_i)$ is the Heisenberg group defined to be the set $V_i\times \Fp$ with the multiplication $(v,t)(v',t'):=(v+v',t+t'+\frac{1}{2}\langle v,v'\rangle)$. 
By combining \eqref{eqn:Yuiso} with the map $K^{i-1}\to \Sp(V_i)$ induced by the conjugation, we define a map
\begin{equation} \label{eqn:Yumap}
K^{i-1}\ltimes J^i \to (J^i/J^i_{+}(\hat \phi_{i-1}))\rtimes  K^{i-1}\to H(V_i)\rtimes \Sp(V_i). 
\end{equation}
By combining \eqref{eqn:Yumap} with the Weil representation of $H(V_i)\rtimes \Sp(V_i)$ with central character $\hat \phi_{i-1}$, we obtain a representation $\widetilde \phi_{i-1}$ of $K^{i-1}\ltimes J^i$. Let $\infl( \phi_{i-1})$ denote the inflation of $ \phi_{i-1}$ via the map  $K^{i-1}\ltimes J^i \to K^{i-1}$. Then $\infl( \phi_{i-1})\otimes\widetilde \phi_{i-1}$ factors through the map  $K^{i-1}\ltimes J^i\to K^{i-1}J^i=K^i$. Let $\phi'_{i-1}$ denote the representation of $K^i$ whose inflation to $K^{i-1}\ltimes J^i$ is $\infl( \phi_{i-1})\otimes\widetilde \phi_{i-1}$. If $r_{i-1}< r_i$, the restriction of $\phi'_{i-1}$ to $G^i_{\iota(y),r_i}$ is $\mathbf{1}$-isotypic.

Let $\infl(\btau'_{i-1})$ be the inflation of $\btau'_{i-1}$ via the map 
\begin{equation}
K^i=K^{i-1}J^i\longrightarrow K^{i-1}J^i/J^i\simeq K^{i-1}/G^{i-1}_{\iota(y),r_{i-1}}.
\end{equation}
Then we define $\btau'_i:=\infl(\btau_{i-1}')\otimes \phi'_{i-1}$, which is trivial on $G^i_{\iota(y),r_i}$ if $r_{i-1}< r_i$, and $\btau_i:=\btau_i'\otimes \phi_i$.

Recall the tamely ramified twisted Levi sequence $\vv \bL:=\vv \bL(\vv \bG)=(\bL^0,\ldots, \bL^d)$ of $\bL$. We attach to $\vv \bL$ the cuspidal datum for $\bL$:
\begin{equation} \label{eqn:Mdatum}
\mcD_\bL:=(\vv \bL,y,\vv r,\btau_{L^0},\vv \phi).
\end{equation}
For $i\in\{1,\ldots,d\}$, we define 
\begin{equation}
J_L^i:=L^i\cap J^i\quad\text{and}\quad J_{L,+}^i:=L^i\cap J_+^i.
\end{equation}
Let $\bepsilon_y^{\bL^i|\bL^{i-1}}$ denote the character of $L^{i-1}_{[y]}J_L^i$  that is trivial on
 $J_L^i$, and whose restriction to $L^{i-1}_{[y]}$ is $\epsilon_y^{\bL^i_\ad|\bL^{i-1}_\ad}\circ\pr^{i-1}_\ad$, where $\epsilon_y^{\bL^i_\ad|\bL^{i-1}_\ad}$ is the character  defined in \eqref{eqn:eps-first} and $\pr^{i-1}_\ad\colon (L_\ad^{i-1})_{[y]}\to (\bL_\ad^{i-1})_{[y]}/(\bL_\ad^{i-1})_{y,0+}$ is the natural projection.
 We set
\begin{equation}
K^i_L:=L_y^0L^1_{y,s_0}\cdots L^i_{y,s_{i-1}}\quad\text{and}\quad K^i_{L,+}:=L_{y,0+}^0L^1_{y,s_0+}\cdots L^i_{y,s_{i-1}+}.
\end{equation}
For $1\le j\le i$,  we define $\epsilon_y^{K_L^i}$  to be the quadratic character of $K^i_L$ whose restriction to $L^0_{\iota(y)}$  is given by 
$\bepsilon^{\bL^i|\bL^{i-1}}_y|_{L^0_y}$  and that is trivial on $L^1_{y,s_0}\cdots L^i_{y,s_{i-1}}$.
The datum  $\mcD_\bL$ gives a supercuspidal type in $L^i$ as follows.
Let $\widetilde{K}_{L}^i$ denote the normalizer in $L^i$ of $K_L^i$. The group $\widetilde{K}_{L}^i$ is a compact modulo center subgroup of $L$. We set
\begin{equation} \label{eqn:sigma}
\btau_L^i:=\btau^i|_{K_L}\quad\text{and}\quad\sigma_{\mcD_L}^i:=\ind_{\widetilde{K}_L^i}^{L}(\epsilon_y^{K_L^i}\btau^i_L).
\end{equation}
Then $\sigma_{\mcD_L}^i$ is an irreducible supercuspidal representation of $L^i$ (see  \cite[proof of Th.~4.1.13]{FKS}).

\subsection{Types and Bernstein decomposition} \label{subsec:covers}
Let $\bL$ be an $F$-rational Levi subgroup of an $F$-rational parabolic of $\bG$. 
Let $\fX_\nr(L)$ denote the group of unramified characters of $L$. Let $\sigma$ be an irreducible supercuspidal smooth representation of $L$. 
We write:
\begin{itemize}
\item $(L,\sigma)_G$ for the $G$-conjugacy class of the pair $(L,\sigma)$;
\item $\fs:=[L,\sigma]_G$ for the $G$-conjugacy class of the pair $(L,\fX_\nr(L)\cdot\sigma)$. 
\end{itemize}
We denote by $\fB(G)$ the set of such classes $\fs$. We set $\fs_L:=[L,\sigma]_L$.

Let $\fR(G)$ denote the category of smooth representations of $G$, and let 
$\fR^\fs(G)$ be the full subcategory of $\fR(G)$ whose objects are the representations $(\pi,V)$ such that every $G$-subquotient of $\pi$ is equivalent to a subquotient of a parabolically induced representation $\ii_P^G(\sigma')$, where $\ii_P^G$ is the functor of normalized parabolic induction  and $\sigma'\in\fX_\nr(M)\cdot\sigma$. 
By \cite{Bernstein-centre}, the categories $\fR^\fs(G)$ are indecomposable and split the category $\fR(G)$ into a direct product:
\begin{equation} \label{eqn:Bernstein decompositionI}
\fR(G)=\prod_{\fs\in\fB(G)}\fR^\fs(G).
\end{equation}
Every object of $\fR^\fs(G)$ has depth $\depth(\sigma)$ (see \cite[Lem.5.2.4 and Lem.5.2.6]{DB} for instance). Let
$\Irr(G)$ (resp. $\Irr_\scusp(G)$) denote the set of classes of irreducible (resp. irreducible supercuspidal) objects in $\fR(G)$, and by $\Irr^\fs(G)$  the classes of irreducible objects in $\fR^\fs(G)$, i.e. the equivalence classes of irreducible representations whose supercuspidal support lies in $\fs$, and call $\Irr^\fs(G)$ the \textit{Bernstein series} attached to $\fs$. By \eqref{eqn:Bernstein decompositionI}, Bernstein series give a partition of the set $\Irr(G)$ of isomorphism classes of irreducible smooth irreducible representations of $G$:
\begin{equation} \label{eqn:Bernstein decomposition}
\Irr(G)=\bigsqcup_{\fs\in\fB(G)}\Irr^\fs(G).
\end{equation}

Let $\bP=\bL\bU$ be a parabolic subgroup of $\bG$ defined over $F$ with Levi factor $\bL$, and let $\overline\bP=\bL\overline\bU$ be the opposite parabolic subgroup. A compact open subgroup $K$ of $G$ is said to \textit{decompose with respect} to $(U,L,\overline U)$ if $K=(K\cap U)\cdot (K\cap L)\cdot (K\cap \overline U$
Let $K$ (resp. $K_L$) be a compact open subgroup of $G$ (resp. $L$), and $\lambda$ (resp. $\lambda_L$) an irreducible smooth representation of $K$ (resp. $K_L$). The pair $(K,\lambda)$ is called a \textit{$G$-cover} of the pair $(K_L,\lambda_L)$ (see \cite{BKtyp, Blondel})  if for any opposite pair of parabolic subgroups $\bP=\bL\bU$ and $\overline\bP=\bL\overline\bU$ with  with Levi factor $\bL$, we have
\begin{enumerate}
\item $K$ decomposes with respect to $(U,L,\overline U)$;
\item $\lambda|_{K_L}=\lambda_L$ and $K\cap U, K\cap\overline U\subset\ker(\lambda)$;
\item for any smooth representation $V$ of $G$, the natural map from $V$ to its Jacquet module $V_U$ induces an injection on $V^{(K,\lambda)}$ the $(K,\lambda)$-isotypic subspace of $V$.
\end{enumerate}
A pair  $(K,\lambda)$ is called  an \textit{$\fs$-type} for $G$ if   the following property is satisfied
\begin{equation}
\text{$\lambda$ occurs in the restriction of $\pi\in\Irr(G)$ if and only if $\pi\in\fs$.}
\end{equation}
We recall that if $(K_L,\lambda_L)$ is an $\fs_L$-type for $L$, then any $G$-cover of $(K_L,\lambda_L)$ is an $\fs$-type for $G$ by \cite[Th. 8.3]{BKtyp}.

We define $\epsilon_y^{K^i}$  to be the quadratic character of $K^i$  defined as
\begin{equation}
\epsilon^{K^i}:=\epsilon_y^{i} \,\circ\, \rq,
\end{equation}
where  $\rq$ is the isomorphism \eqref{eqn:rqt}.
\begin{theorem}  \label{thm:cover}
For each $i\in\{0,\ldots,d\}$, the pair
$(K_L^i,\epsilon^{K_L^i}\btau_L^i)$ is an $\fs_L$-type for $L^i$, and 
$(K^i,\epsilon^{K^i}\btau^i)$ is a $G^i$-cover of $(K_L^i,\epsilon^{K_L^i}\btau_L^i)$, thus it is an $\fs^i$-type for $\fs^i=[L^i,\sigma_{\mcD_L}^i]_{G^i}$.
\end{theorem}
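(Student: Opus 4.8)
The plan is to prove the two assertions of Theorem~\ref{thm:cover} together by induction on $i\in\{0,\ldots,d\}$, following the strategy of \cite[\S7]{Kim-Yu} but incorporating the quadratic character $\epsilon_y$ of \cite[\S4.1]{FKS}. The base case $i=0$ is a depth-zero statement: one shows that $(K_L^0,\epsilon^{K_L^0}\btau_L^0)=(L^0_y,\epsilon_y^0\btau_{L^0})$ is an $\fs_L$-type for $L^0$ (this is essentially the construction of a depth-zero supercuspidal type, cf.\ \cite[\S3]{Fintzen}), and that $(K^0,\epsilon^{K^0}\btau_0)=(K_{G^0},\epsilon_y^0\btau_{\mcD^0})$ is a $G^0$-cover of it; the cover axioms at depth zero follow from the Iwahori decomposition of $G^0_{\iota(y),0}$ relative to $(U,L^0,\overline U)$ and the fact that the $(y,0)$-generic embedding forces $K^0\cap U$ and $K^0\cap\overline U$ to lie in the pro-$p$ radical, hence in $\ker(\epsilon^{K^0}\btau_0)$.

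\textbf{The inductive step.} Assume the statement for $i-1$. First I would establish that $(K^i_L,\epsilon^{K^i_L}\btau^i_L)$ is an $\fs_L$-type for $L^i$: by construction $\btau^i_L=\btau^i|_{K^i_L}$ and $\epsilon^{K^i_L}$ restricts on $L^0_y$ to $\bepsilon^{\bL^i|\bL^{i-1}}_y|_{L^0_y}$ and is trivial on the higher-depth factors, so $\ind_{\widetilde K^i_L}^{L^i}(\epsilon^{K^i_L}\btau^i_L)=\sigma^i_{\mcD_L}$ is irreducible supercuspidal by \cite[proof of Th.~4.1.13]{FKS}; the type property then follows because the supercuspidal support of any $\pi\in\Irr(L^i)$ containing $\epsilon^{K^i_L}\btau^i_L$ must be $L^i$-conjugate to $(L^i,\sigma^i_{\mcD_L})$, using the exhaustion of supercuspidals by the twisted Yu construction together with intertwining computations. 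For the cover assertion, I would verify the three axioms for $(K^i,\epsilon^{K^i}\btau^i)$ relative to an arbitrary opposite pair $\bP=\bL\bU$, $\overline\bP=\bL\overline\bU$ with Levi $\bL=\bL^i$. Axiom~(1), the Iwahori-type decomposition $K^i=(K^i\cap U)(K^i\cap L^i)(K^i\cap\overline U)$, is a Moy--Prasad/Bruhat--Tits statement about the groups $G^j_{\iota(y),s_{j-1}}$ and carries over verbatim from \cite[Prop.~7.4]{Kim-Yu}, since $\epsilon^{K^i}$ is just a character and does not affect the group $K^i$. Axiom~(2), namely $(\epsilon^{K^i}\btau^i)|_{K^i_L}=\epsilon^{K^i_L}\btau^i_L$ and triviality on $K^i\cap U,\,K^i\cap\overline U$, reduces to checking that $\epsilon^{K^i}$ restricts correctly --- this is where the definition $\epsilon^{K^i}=\epsilon^i_y\circ\rq$ and the compatibility of $\epsilon^i_y$ with $\epsilon^{K^i_L}_y$ via the adjoint projection is used --- together with the already-known triviality of $\btau^i$ on the unipotent parts, which is built into Yu's construction (the restriction of $\btau'_i$ to $G^i_{\iota(y),r_i}$ being $\mathbf 1$-isotypic and $K^i\cap U\subset G^i_{\iota(y),s_{i-1}+}$). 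Axiom~(3), the injectivity on Jacquet modules, is the substantive point and I would deduce it from the corresponding statement at level $i-1$ by a Hecke-algebra argument: the support of the Hecke algebra $\mathcal H(G^i,\epsilon^{K^i}\btau^i)$ is controlled by the same double-coset computations as in \cite[\S8--9]{Kim-Yu} and \cite{Yu}, and twisting by the quadratic character $\epsilon^{K^i}$ does not enlarge this support, so the covering property is inherited.

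\textbf{Conclusion and main obstacle.} Having verified the three axioms, $(K^i,\epsilon^{K^i}\btau^i)$ is a $G^i$-cover of $(K^i_L,\epsilon^{K^i_L}\btau^i_L)$; since the latter is an $\fs_L$-type for $L^i$ with $\fs_L=[L^i,\sigma^i_{\mcD_L}]_{L^i}$, \cite[Th.~8.3]{BKtyp} yields that $(K^i,\epsilon^{K^i}\btau^i)$ is an $\fs^i$-type for $\fs^i=[L^i,\sigma^i_{\mcD_L}]_{G^i}$, completing the induction. I expect the main obstacle to be axiom~(3): one must show that the presence of the quadratic character $\epsilon^{K^i}$ does not disturb the intertwining and support computations that underlie the covering property in \cite{Kim-Yu}. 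The cleanest route is to observe that $\epsilon^{K^i}$ extends to a character of $\widetilde K^i$ (the normalizer of $K^i$) --- so that tensoring by it is an equivalence $\fR^{\fs^i}(G^i)\to\fR^{\fs^i}(G^i)$ on the relevant Hecke-module categories --- and thereby transport the covering property of the untwisted pair $(K^i,\btau^i)$, established in \cite{Kim-Yu,Fintzen}, to the twisted one. Checking that $\epsilon^{K^i}$ so extends, compatibly with the normalizer action, is exactly the content of the compatibility statement \eqref{eqn:eps-first} and of \cite[Th.~3.4, Cor.~3.6]{FKS}, and is the technical heart of the argument.
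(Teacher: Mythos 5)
Your proposal is correct and follows essentially the same route as the paper, whose entire proof is the one-line remark that the argument is analogous to that of Kim--Yu, Theorem~7.5: you run the Kim--Yu cover/type induction and check that twisting by the quadratic character $\epsilon^{K^i}$ of \cite{FKS} does not affect the Iwahori decomposition, the restriction to $K^i_L$, or the intertwining/Jacquet-module computations. Your elaboration of why the twist is harmless (in particular that $\epsilon^{K^i}$ extends compatibly to the normalizer via \cite[Th.~3.4, Cor.~3.6]{FKS}) is exactly the content the paper leaves implicit.
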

\begin{proof} The proof  is analogous to that of \cite[Th.~7.5]{Kim-Yu}.  
\end{proof}

\subsection{Regular representations} \label{subsec:reg}
Let $\bbbL$ be a connected reductive group defined over the finite field $k_F$, let $\bbbT$ be a maximal torus of $\bbbL$.
In \cite[Def.~5.15]{Deligne-Lusztig}, Deligne and Lusztig defined two regularity conditions for a character $\theta$ of $\bbT:=\bbbT(k_F)$, which we now recall:
\begin{itemize}
 \item $\theta$ is said to be \textit{in general position} if its stabilizer in $(\Nor_\bbbG(\bbbT)/\bbbT)(k_F)$ is trivial. 
\item $\theta$ is said to be \textit{non-singular} if it is not orthogonal to any coroot.
\end{itemize} 
By \cite[Prop.~5.16]{Deligne-Lusztig}, if the centre of $\bG$ is connected, $\theta$ is non-singular if and only if it is in general position.

Let $\bS$ be a maximal torus of $\bL $ which  defined over $F$, and let $\bT$ denote the maximal unramified subtorus of $\bS$. Let $F'/F$ be an unramified extension splitting $\bT$ and $R_\res(\bL,\bT)$ denote the set of restrictions to $\bT$ of the absolute roots in $R(\bL,\bS)$.
Let $y$ be a vertex in $\cB(\bL/\rZ_{\bL},F)$, and let  $\bbbT$ be an elliptic maximal torus of $\bbbL_y$. By \cite[Lem. 2.3.1]{DB06} there exists a maximally unramified elliptic maximal torus $\bT$ of $\bG$ such that the reductive quotient of the special fiber of the connected N\'eron model of $\bT$ is $\bbbT$.
In \cite[Def.3.4.16]{Kal-reg}, Kaletha introduced the following definition: a character $\theta$ of $\bbbT$ is called \textit{regular} if its stabilizer in $(\Nor_\bL(\bT)/\bT)(F)$ is trivial. By \cite[Fact 3.4.18]{Kal-reg}, if $\theta$ is regular, then it is in general position. 

Let $\vartheta$ be a depth-zero character of $S$. Let $S_0$ and $\bbT$ be defined as in \eqref{eqn:S0} and \eqref{eqn:bbT}. The character $\vartheta$ is  said to be \textit{regular} if its restriction to $S_0$ equals the inflation of a regular character of $\bbT$  (see \cite[Def.~3.4.16]{Kal-reg}).

Let $\sigma$ be any irreducible depth-zero supercuspidal representation of $L$. There exists a vertex $y\in\cB_\red(\bL,F)$ and an irreducible cuspidal representation $\tau$ of $\bbL_y:=\bbbL_y(k_F)$, such that the restriction of $\sigma$ to $L_{y,0}$ contains the inflation $\infl(\tau)$ of $\tau$ (see \cite[\S1-2]{Morris-ENS} or  \cite[Prop.~6.6]{Moy-PrasadII}). The normalizer $\rN_{L}(L_{y,0})$ of $L_{y,0}$ in $L$ is a  totally disconnected group that is compact mod center, which, by \cite[proof of (5.2.8)]{BTII}, coincides with the fixator $L_{[y]}$ of $[y]$ under the action of $L$ on the reduced building of $\bL$. Let $\btau$ denote an extension of $\infl(\tau)$ to $L_{[y]}$. Then $\sigma$ is compactly induced from a representation of $\rN_L(L_{y,0})$: 
\begin{equation} \label{eqn:depth zero supercuspidal}
\sigma=\cInd_{L_{[y]}}^L(\btau).
\end{equation}
The representation $\sigma$ is called \textit{regular} if $\tau=\pm R_\bbbT^{\bbbL_{y,0}}(\theta)$ for some pair $(\bbbT,\theta)$, where $\bbbT$ is an elliptic maximal torus of $\bbbL_{y,0}$ and $\theta$ is a regular character of $\bbT$ (see \cite[Def.~3.4.19]{Kal-reg}).

A supercuspidal representation of $L$ is called \textit{regular} if it arises via the construction described in \S\ref{subsec:review} from a cuspidal $L$-datum $\cD^L$ such that the representation $\sigma^0$ of $L^0$ is regular (see \cite[Def.~3.7.9]{Kal-reg}).

\begin{defn} \label{defn:regular}
An irreducible smooth representation $\pi$ of $G$ is \textit{regular}  if its supercuspidal support is regular, i.e. if $\pi\in\Irr^\fs(G)$ with $\fs=[L,\sigma]_L$ such that $\sigma$ is regular.
\end{defn}

\begin{prop} \label{prop:GL}
Any irreducible smooth representation of $\GL_N(F)$ is regular.
\end{prop}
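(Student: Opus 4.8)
The plan is to pass from $\pi$ to its supercuspidal support, reduce to a single general linear factor, and conclude from the classification of cuspidal characters of $\GL_m$ over a finite field. First I would use the Bernstein decomposition \eqref{eqn:Bernstein decomposition} and Definition~\ref{defn:regular}: $\pi$ is regular exactly when its supercuspidal support $(\bL,\sigma)$ has $\sigma$ regular, so it suffices to treat an irreducible supercuspidal representation $\sigma$ of a Levi subgroup $\bL$ of $\GL_N$. Every such $\bL$ is a product $\GL_{N_1}(F)\times\cdots\times\GL_{N_r}(F)$; a cuspidal $\bL$-datum --- and its depth-zero layer --- decomposes as a product over the factors, and $\sigma$ is regular as soon as each of its tensor factors is. So I may assume $\bL=\GL_N$, and must show that an arbitrary irreducible supercuspidal representation $\sigma$ of $\GL_N(F)$ is regular.

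Next I would invoke exhaustion of the supercuspidal dual by Yu-type data: $\sigma$ arises from the construction of \S\ref{subsec:review}, applied to a cuspidal $\GL_N$-datum $\mcD=((\vv\bG,\bL^0),(y,\{\iota\}),\vv r,\btau_{L^0},\vv\phi)$, by \cite{Fintzen,Kim} (exhaustion --- for $\GL_N$ this holds for every $p$, by the theory of simple types) together with \cite{FKS} (the quadratic twist built into the construction of \S\ref{subsec:review} restricts, on the depth-zero layer, to a character factoring through the determinant on each general linear block, hence is harmless for what follows). The terms $\bG^0\subset\cdots\subset\bG^d=\GL_N$ of $\vv\bG$ are centralizers of tamely ramified semisimple elements, so each $\bG^i$, and with it the Levi subgroup $\bL^0$ of $\bG^0$, is a direct product of groups $\Res_{E'/F}\GL_m$ with $E'/F$ tamely ramified. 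By \cite[Def.~3.7.9 and Def.~3.4.19]{Kal-reg} (recalled in \S\ref{subsec:reg}), $\sigma$ is regular if and only if the depth-zero supercuspidal $\sigma^0$ of $L^0$ built from $\btau_{L^0}$ is regular, i.e.\ if and only if on each $\GL_m(E')$-block the irreducible cuspidal representation $\tau$ of the reductive quotient $\bbbL_y$ occurring in $\btau_{L^0}|_{L^0_{y,0}}$ has the shape $\pm R_{\bbbT}^{\bbbL_y}(\theta)$ for an elliptic maximal torus $\bbbT$ of $\bbbL_y$ and a regular character $\theta$ of $\bbT$.

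To conclude I would analyse one block. There $\bbbL_y$ is $\GL_m$ over the residue field $k'$ of the maximal unramified subextension of $E'/F$, so $\bbbL_y(k_F)=\GL_m(k')$; its elliptic maximal torus $\bbbT$ is unique up to conjugacy, with $\bbbT(k_F)\simeq k'_m{}^{\times}$ (the multiplicative group of the degree-$m$ extension of $k'$) and with $(\Nor_{\bbbL_y}(\bbbT)/\bbbT)(k_F)$ cyclic of order $m$, acting on $\bbbT(k_F)$ through $\Gal(k'_m/k')$. By the classification of the irreducible characters of $\GL_m$ over a finite field \cite{Deligne-Lusztig}, every irreducible cuspidal representation of $\GL_m(k')$ --- in particular $\tau$ --- is $\pm R_{\bbbT}^{\GL_m}(\theta)$ for a character $\theta$ in general position, i.e.\ with trivial stabilizer in $(\Nor_{\bbbL_y}(\bbbT)/\bbbT)(k_F)$ (equivalently, since $\GL_m$ has connected centre, non-singular, by \cite[Prop.~5.16]{Deligne-Lusztig}). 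It then remains to match this with the arithmetic regularity of \cite[Def.~3.4.16]{Kal-reg}: for the maximally unramified elliptic maximal torus $\bT$ of $\GL_m(E')$ whose connected N\'eron model reduces to $\bbbT$ (it exists by \cite[Lem.~2.3.1]{DB06}), we have $\bT=\Res_{E''/E'}\bbG_m$ for the unramified degree-$m$ extension $E''/E'$, so $(\Nor_{\bL^0}(\bT)/\bT)(F)=\Gal(E''/E')$ is also cyclic of order $m$ and acts on $\bbT=T_0/T_{0+}\simeq k'_m{}^{\times}$ through the very reduction that identifies it with $\Gal(k'_m/k')$. Hence the stabilizer of $\theta$ in $(\Nor_{\bL^0}(\bT)/\bT)(F)$ equals its stabilizer in $(\Nor_{\bbbL_y}(\bbbT)/\bbbT)(k_F)$, namely is trivial, so $\theta$ is regular; therefore $\tau$, then $\sigma^0$, then $\sigma$, and finally $\pi$, are all regular.

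The step I expect to be the main obstacle is that last comparison: one must be sure the $p$-adic relative Weyl group $(\Nor_{\bL^0}(\bT)/\bT)(F)$ is not strictly larger than the residual one $(\Nor_{\bbbL_y}(\bbbT)/\bbbT)(k_F)$, so that being ``in general position'' over the residue field already yields Kaletha-regularity. For $\GL_m$ this works out because both groups are the Galois group of a cyclic degree-$m$ extension --- unramified on the $p$-adic side, an extension of finite fields on the residue side --- with compatible actions on $\bbbT$; this coincidence has no analogue for general reductive groups, which is exactly why the statement is special to $\GL_N$. A minor point: the cuspidal datum of a supercuspidal of $\GL_N$ may have $\bL^0$ a proper Levi of $\bG^0$, but since $\bL^0$ is then still a product of general linear groups over tame extensions, the block-by-block argument applies verbatim.
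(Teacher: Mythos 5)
Your argument is correct and is essentially the paper's: after the same reduction via the supercuspidal support to a single $\GL$-factor, the paper likewise observes that the depth-zero layer lives on a finite $\GL_{N_E}(k_E)$, whose cuspidal representations are all $\pm R_{\bbbT}(\theta)$ with $\theta$ in general position, equivalently regular --- the comparison of the $p$-adic and residual Weyl-group stabilizers that you spell out is exactly the point the paper leaves implicit (and it also notes one can simply quote \cite[Lem.~3.7.7]{Kal-reg}). So your proposal matches the paper's proof, with a more explicit treatment of that last equivalence.
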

\begin{proof}
Let $G:=\GL_N(F)$, let $\fs\in\fB(G)$ and let $\pi\in\Irr^\fs(G)$.  There is a partition $(N_1,N_2,\ldots,N_r)$ of $N$ such that $\fs=[L,\sigma]_G$ with
\[L=\GL_{N_1}(F)\times\GL_{N_2}(F)\times\cdots\times\GL_{N_r}(F)\quad\text{and}\quad \sigma=\sigma_1\otimes\sigma_2\otimes\cdots\otimes\sigma_r,\]
with $\fs_i:=[\GL_{N_i}(F),\sigma_i]_{\GL_{N_i(F)}}\in\fB(\GL_{N_i(F)})$.  The representation $\sigma$ is regular if and only if the representations $\sigma_1$, $\ldots$, $\sigma_r$ are all regular. Thus, by Definition~\ref{defn:regular}, the representation $\pi$ is regular if and only if the representations $\sigma_1$, $\ldots$, $\sigma_r$ are all regular.  Thus we are reduced to show that any supercuspidal irreducible smooth representation $\pi$ of $\GL_N(F)$ is regular. Then the result follows from \cite[Lem.3.7.7]{Kal-reg}. We can also recover it as follows. By applying the construction above with $L=G$, we obtain that $G^0\simeq\GL_{N_E}(E)$, where $E/F$ is finite tamely ramified extension (here $N_E=N/[E:F]$) and $\bbG^0_y$ is isomorphic to $\GL_{N_E}(k_E)$. Any cuspidal irreducible representation of $\GL_{N_E}(k_E)$ is up to sign a Deligne-Lusztig character induced from a character $\theta_E$ in general position of the $k_E$-points of an elliptic maximal torus, but equivalently $\theta_E$ is regular.
\end{proof}
\begin{remark}{\rm When $G=\GL_N(F)$ we have also at our disposal the Bushnell-Kutzko construction of supercuspidal irreducible smooth representation in \cite{BKlivre}, which works for any prime number $p$.
By \cite{MY}, this construction coincides with the one in \cite{Yu} when $p$ does not divide the order of the Weyl group of $G$. 

Proposition~\ref{prop:GL} extends to this more general setting as follows.
Let $\pi$ be a supercuspidal irreducible smooth representation of $\GL_N(F)$.   By \cite[Chap.6]{BKlivre}, for $p$ arbitrary, the representation $\pi$ is compactly induced from a representation $\Lambda$ of $E^\times J(\beta,\fA)$, where $J(\beta,\fA)$ is a compact open subgroup of $\GL_N(F)$ and $\Lambda$ is an extension of an irreducible smooth representation $\lambda$ of $J(\beta,\fA)$. Here $E/F$ is a (possibly wildly ramified) finite extension, $\beta$  an element of $\Mat_n(F)$ and $\fA$ an hereditary $\fo_F$-order such that $[\fA,n,0,\beta]$ is a simple stratum. There is a subgroup $J^1(\beta,\fA)$ such that
\begin{equation}
J(\beta,\fA)/J^1(\beta,\fA)\simeq G^{\beta}_{y_\fA,0}/G^{\beta}_{y_\fA,0+},
\end{equation}
where $G^\beta\simeq \GL_{N_E}(E)$ is the centralizer of $E$ in $\GL_N(F)$ and $G^{\beta}_{y_\fA,0}$ denotes the parahoric subgroup of $G^\beta$ associated to a vertex $y_\fA$ in $\cB(G^\beta,E)$ which corresponds to $\fA\cap G^\beta$. Moreover, $\lambda=\infl(\tau_E)\otimes\kappa$, where $\infl(\tau_E)$  is the inflation to $G^{\beta}_{y_\fA,0}$ of an  irreducible cuspidal representation of $G^{\beta}_{y_\fA,0}/G^{\beta}_{y_\fA,0+}\simeq\GL_{N_E}(k_E)$, and $\kappa$ is certain representation of $J(\beta,\fA)$ that extends a Heisenberg representation.
The normalizer of $G^{\beta}_{y_\fA,0}$ coincides with $E^\times G^{\beta}_{y_\fA,0}$. Since  $\tau_E$ is regular, the depth-zero representation $\cInd_{E^\times G^{\beta}_{y_\fA,0}}^{G^{\beta}}(\btau_E)$ is a regular supercuspidal irreducible representation of $G^{\beta}$.
}
\end{remark}

\subsection{Non-singular representations} \label{subsec:ns}
As in \cite[Def.~3.1.1]{Kaletha-nonsingular}, we call a depth-zero character $\vartheta\colon S\to\CC^\times$  \textit{$F$-non-singular} if for every $\alpha\in R_\res(\bL,\bT)$ the character $\vartheta\circ\Nor_{F'|F}\circ\alpha^\vee\colon F^{\prime\times}\to \CC^\times$ has non-trivial restriction to $\fo_{F'}$.
The character $\vartheta$ factors through a character $\theta$ of $\bbT$. We denote the restriction of $\theta$ to $\bbbS^\circ(k_F)$ by $\theta^\circ$. The character $\vartheta$ is called \textit{$k$-non-singular}, if for every $\overline\alpha\in R(\bbbS^\circ,\bbbL_y)$ the character $\theta^\circ\circ\Nor_{k_{F'}|k_F}\circ\overline\alpha^\vee\colon k_{F'}^\times\to \CC^\times$ is non-trivial. As noticed in  \cite[Rem.~3.1.2]{Kaletha-nonsingular}, these notions do not depend on the choice of $F'$.

Let $\vartheta\colon S\to \CC^\times$ be a character. The pair $(\bS,\vartheta)$ is said to be \textit{tame $F$-non-singular elliptic} in $\bL$ (see \cite[Def.~3.4.1]{Kaletha-nonsingular}) if
\begin{itemize}
\item $\bS$ is elliptic (i.e. $\bS$ is anisotropic modulo $\rZ_\bL$) and its splitting extension $E/F$ is tame;
\item  Inside the connected reductive subgroup $\bL^0$ of $\bL$ with maximal torus $\bS$
and root system
\begin{equation}
R_{0+}:= \{\alpha\in R(\bL,\bS)|\vartheta(\Nor_{E|F}(\alpha^\vee(E_{0+}^\times)))=1\},
\end{equation}
the torus $\bS$ is maximally unramified.
\item The character $\vartheta$ is $F$-non-singular with respect to
$\bL^0$.
\end{itemize}
In \cite{Kal-reg, Kaletha-nonsingular}, Kaletha describes how to construct  supercuspidal representations $\sigma_{\bS,\vartheta}$ of $L$ from tame $F$-non-singular elliptic pairs $(\bS,\vartheta)$ in $\bL$. The representation $\sigma_{\bS,\vartheta}$ is obtained in two steps. One starts by unfolding the pair $(\bS,\vartheta)$ into a cuspidal $\bL$-datum $\cD_{\bS,\vartheta}:=(\vv \bL,y,\vv r, \sigma^0,\vv \phi)$. The properties of $\bS$ and $\vartheta$ allow to go to the reductive quotient and use the theory of Deligne-Lusztig cuspidal representations in order to construct $\sigma^0$, the so-called depth-zero part of the datum $\cD_{\bS,\vartheta}$. The second step involves the construction described in \S\ref{subsec:review} to the obtained $\bL$-datum.

\smallskip

Let $\bbbU$ be the unipotent radical of a Borel subgroup of $\bbbL_{y}$. The corresponding Deligne-Lusztig variety is
\begin{equation}
Y_\bbbU^{\bbbL_y}:=\left\{l\bbbU\in \bbbL_{y}/\bbbU\,:\, l^{-1}\Frob(l)\in\bbbU\cdot\Frob(\bbbU)\right\}.
\end{equation}
The group $\bbL_y=\bbbL_y(k_F)$ acts on $Y_\bbbU^{\bbbL_y}$ by the left multiplication and $\bbT=\bbbT(k_F)$ acts on $Y_\bbbU^{\bbbL_y}$ by right multiplication. So these groups act on the $\ell$-adic cohomology group $H^i_c(Y_\bbbU^{\bbbL_y},\overline{\QQ}_{\ell})$. We define
\begin{equation}
H_c^{d_\bbbU}(Y_\bbbU^\bbbL,\overline{\QQ}_{\ell})_{\theta}:=\left\{h\in H^i_c(Y_\bbbU^{\bbbL_y},\overline{\QQ}_{\ell})\,:\,h\cdot s=\theta(s)h\text{ for all $s\in\bbT$}\right\}. 
\end{equation}
Let $\kappa_{(\bbbT,\theta)}^{\bbbL_y}$ be the isomorphism class of the representation $H_c^{d_\bbbU}(Y_\bbbU^\bbbL,\overline{\QQ}_{\ell})_{\theta}$.  The representation $\sigma_{(\bbbT,\theta)}:=\cInd_{L_y}^{L}\infl(\kappa_{(\bbbT,\theta)}^{\bbL_y})$ is supercuspidal but not necessarily irreducible. 

Let $\rN_{\bbL_{[y]}}(\bbbT,\theta)$ denote  the stabilizer of the pair $(\bbbS,\theta)$ in $\bbL_{[y]}$
by the conjugate action, and let $\Irr_\theta(\rN_{\bbL_{[y]}}(\bbbS,\theta))$ be the set of irreducible representations of $\rN_{\bbL_{[y]}}(\bbbT,\theta)$ whose restriction to $\bbS$ is $\theta$-isotypic.
When $\sigma_{(\bbbT,\theta)}$ is reducible, it decomposes as \cite[3.3.3]{Kaletha-nonsingular}
\begin{equation}
    \sigma_{(\bbbT,\theta)}=\sum\limits_{\rho\in \Irr_\theta(\rN_{\bbL_{[y]}}(\bbbT,\theta))}\dim(\rho)\sigma_{(\bbbT,\theta,\rho)}^{\varepsilon},
\end{equation}
where the constituents $\sigma_{(\bbbT,\theta,\rho)}^{\varepsilon}:=\cInd_{L_y}^{L}\infl(\kappa_{(\bbbT,\theta,\rho)}^{\bbL_y,\varepsilon})$, constructed from $\kappa_{(\bbbT,\theta,\rho)}^{\varepsilon}$ as in \cite[Def.~2.7.6]{Kaletha-nonsingular}, are irreducible \textit{non-singular} supercuspidal representations. Here $\varepsilon$ is a fixed coherent splitting of the family of $2$-cocycles $\{\eta_{\Psi,\bbbU}\}$ as in \cite[\S 2.4]{Kaletha-nonsingular}. 
The positive-depth supercuspidals can be described similarly, by applying the construction  of \S\ref{subsec:review} to the representation $\sigma_{(L^0,\bbbT, \phi_{-1})}$ of $L^0$ associated to the pair $(\bbbT, \phi_{-1})$ in \cite[(3.2)]{Kaletha-nonsingular}, where $\phi_{-1}$ is a character of $S$ which is trivial if $\bL^0=\bS$ and otherwise satisfies $\phi_{-1}|_{S_{0+}}=1$.

Let $\sigma\in\Irr_\scusp(L)$ be non-singular, and let $\chi\in\fX_{\nr}(L)$. We observe that $\sigma\otimes\chi$ is also non-singular. 
Then we say that $\fs=[L,\sigma]_G\in\fB(G)$ is non-singular. 
In the following definition, we extend the notions of  regularity and non-singularity to arbitrary irreducible smooth representations of $G$:

\begin{defn} \label{defn:non-singular}
An irreducible smooth representation $\pi$ of $G$ is  \textit{non-singular} if its supercuspidal support is  non-singular, i.e. if $\pi\in\Irr^\fs(G)$ with $\fs=[L,\sigma]_L$ such that $\sigma$ is  non-singular.
\end{defn}

\begin{example}
{\rm We assume that the cardinality $q$ of $k_F$ is odd.  Let $W=\{1,w\}$ denote the Weyl group of $\GL_2$ with respect to a split maximal torus $\widetilde\bT$. We have $\widetilde\bT(k_F)\simeq\FF_q^\times\times\FF_q^\times$. Let $\widetilde\bT_w$ be a torus of type $w$ with respect to $\widetilde\bT$. We have  
\[\widetilde\bT_w(k_F)\simeq\left\{\left(\begin{smallmatrix} \lambda&0\cr 0&\lambda^q\end{smallmatrix}\right)\,:\, \lambda\in\mathbb{F}_{q^2}^\times\right\}\simeq\mathbb{F}_{q^2}^\times.\]
The irreducible cuspidal representations of $\GL_2(k_F)$  are the $-R_{\widetilde\bT_w}^{\GL_2}(\widetilde\omega)$, where $\widetilde\omega$ denote a character of $\mathbb{F}_{q^2}^\times$ such that $\widetilde\omega\ne\widetilde\omega^q$.   
Let $\bT$ be the split maximal torus of $\SL_2$. We have $\bT(k_F)\simeq\mu_{q-1}$. Let $\bT_w$ be a torus of type $w$ with respect to $\bT$. We have
\[\bT_w(\Fq)\simeq\left\{\left(\begin{smallmatrix} \lambda&0\cr 0&\lambda^q\end{smallmatrix}\right)\,:\, \lambda^{q+1}=1\right\}\simeq\mu_{q+1}.\]
Let $\omega$ be a character of $\mu_{q+1}$. Then  we have 
\begin{equation}
-R_{T_w}^{\SL_2}(\omega)=-\Res^{\GL_2(k_F)}_{\SL_2(k_F)}(R_{\widetilde\bT_w}^{\GL_2}(\widetilde\omega)).
\end{equation}
If  $\omega^2\ne 1$, then the representation $\tau:=-R_{T_w}^{\SL_2}(\omega)$ is an irreducible cuspidal representation of $\SL_2(\Fq)$. 
If $\omega=\omega_0$ is the quadratic character of $\mu_{q+1}$, then
 \begin{equation} \label{eqn:reduc}
 -R_{T_w}^{\SL_2}(\omega)=\tau_+ + \tau_,
 \end{equation}
where $\tau_+$ and $\tau_-$ are irreducible cuspidal representations of $\SL_2(k_F)$ of dimension $\frac{q-1}{2}$ (see \cite[Table~2]{DM}).
Let $K:=\SL_2(\fo_F)$ and $K':=uKu^{-1}$ where $u=\left(\begin{smallmatrix} 1&0\cr 0&\varpi_F\end{smallmatrix}\right)$.Then $\{K,K'\}$ is a set of representatives of the maximal parahoric subgroups of $\SL_2(F)$, and the reductive quotients of $K$ and $K'$ are both  isomorphic to  $\SL_2(k_F)$. 
We denote by $\infl(\tau)$ and $\infl(\tau)'$ the inflations of $\tau$ to $K$ and $K'$ respectively, and by $\infl(\tau_\pm)$ and $\infl(\tau_\pm)'$ 
the inflations of $\tau_\pm$ to $K$ and  $K'$ respectively.
Thus, the depth-zero supercuspidal representations of $\SL_2(F)$ are the representations $\cInd_{K}^G(\infl(\tau))$,   $\cInd_{K'}^G(\infl(\tau)')$, which are regular, and the four representations
$\cInd_{K}^G(\infl(\tau_\pm))$, $\cInd_{K'}^G(\infl(\tau_\pm)')$, which are non-regular non-singular.}
\end{example}

\section{Langlands correspondence and Bernstein Center} \label{sec:BC}
\subsection{Generalized Springer correspondence} \label{subsec:GSC}
Let $\cG$ be  a complex (possibly disconnected) reductive group, and let $\cG^\circ$ be the identity component of $\cG$.  If $g\in\cG$, we denote by $(g)_\cG$ its $\cG$-conjugacy class. We denote by $\Unip(\cG)$ the unipotent variety of $\cG$, and  by $\Unip_\enh(\cG)$ the set of $\cG$-conjugacy classes of pairs $(u,\rho)$, with $u\in \cG$ unipotent and  $\rho\in\Irr(A_{\cG}(u))$, where
$A_{\cG}(u):=\rZ_{\cG}(u)/\rZ_{\cG}(u)^\circ$ denotes the component group of the centralizer of $u$ in $\cG$.

Let $\cP_0$ be a parabolic subgroup of $\cG^\circ$ with unipotent radical $\cU$, let $\cM_0$ be a Levi factor of $\cP^\circ$, and let $v$ be a unipotent element in $\cM_0$. The group $\rZ_{\cG^\circ}(u)\times\rZ_{\cM_0}(v)\cU$ acts on the variety
\begin{equation}
\cY_{u,v}:=\left\{y\in\cG^\circ \;:\; y^{-1}uy\in v\cU\right\}
\end{equation}
by $(g,p)\cdot y:= gyp^{-1}$, with $g\in\rZ_{\cG^\circ}(u)$, $p\in\rZ_{\cM_0}(v)\cU$, and $y\in\cY_{u,v}$. 
The group $A_{\cG^\circ}(u)\times A_{\cM_0}(v)$ acts on the set of irreducible components of $\cY_{u,v}$ of maximal dimension, and we denote by $\sigma_{u,v}$ the corresponding permutation representation.

Let $\langle\;,\;\rangle_{A_{\cG^\circ}(u)}$ be the usual scalar product of the space of class functions on the finite group $A_{\cG^\circ}(u)$ with values in $\Qlbar$. Let $\rho^\circ\in\Irr(A_{\cG^\circ}(u))$. The representation $\rho^\circ$ is called \textit{cuspidal}  if
$\langle\rho^\circ,\sigma_{u,v}\rangle_{A_{\cG^\circ}(u)}\ne 0$ for any choice of $v$ implies that $\cP_0=\cG^\circ$.  We denote by $\fB(\Unip_\enh(\cG^\circ))$ the set of $\cG^\circ$-conjugacy classes of triples $\ft^\circ:=(\cM_0,v,\rho^\circ_\cusp)$ such that $\cM_0$ is a Levi subgroup of (a parabolic subgroup of) $\cG^\circ$,  and $(v,\rho^\circ_\cusp)$ is a cuspidal unipotent pair in $\cM_0$, i.e. $v$ is a unipotent element in $\cM_0$, and $\rho^\circ_\cusp$ is a cuspidal irreducible representation of $A_{\cM_0}(v)$.
Let $\ft^\circ\in\fB(\Unip_\enh(\cG^\circ))$ and $W_{\ft^\circ}:=\Nor_{\cG^\circ}(\ft^\circ)/\cM_0$. By \cite[\S9.2]{LuIC}, the group  $W_{\cM_0}:=\Nor_{\cG^\circ}(\cM_0)/\cM_0$ is a Weyl group and $W_{\ft^\circ}\simeq W_{\cM_0}$. 

We consider now the case of the (possibly disconnected) group $\cG$.  Let $u$ be a unipotent element in $\cG^\circ$. We first observe that $\rZ_{\cG}(u)^\circ=\rZ_{\cG^\circ}(u)^\circ$.  (The inclusion  $\rZ_{\cG^\circ}(u)^\circ\subset \rZ_{\cG}(u)^\circ$ is obvious. On the other side, since $\rZ_{\cG}(u)^\circ$ is connected, it is contained in $\cG^\circ$. Hence, $\rZ_{\cG}(u)^\circ$ is contained in $\rZ_{\cG^\circ}(u)$. But, since it is connected, it is contained in  $\rZ_{\cG^\circ}(u)$). It follows that the natural inclusion $\rZ_{\cG^\circ}(u)\subset \rZ_{\cG}(u)$ induces an injection of $A_{\cG^\circ}(u)$ into $A_{\cG}(u)$. A unipotent pair $(u,\rho)$ with $u\in\cG^\circ$ and $\rho\in\Irr(A_{\cG}(u))$ is called \textit{cuspidal} if (one of, or equivalently all) the irreducible constituents of the restriction of $\rho$ to $A_{\cG^\circ}(u)$ are cuspidal. 

We call \textit{quasi-Levi subgroup} of $\cG$  a subgroup $\cM$ of the form
$\cM=\rZ_\cG (\rZ_{\cL}^\circ)$, with $\cL$ a Levi subgroup of $\cG^\circ$. We observe that $\cM^\circ=\cL$. 
Note that a Levi subgroup of $\cG$ is a quasi-Levi subgroup, and in the case when $\cG$ connected, both notions coincide.
The group $\cM$ is said to be {\em cuspidal} if there exists a cuspidal unipotent pair in $\cM$. 

Let $\fL(\cG)$ be a set of representatives for the conjugacy classes of quasi-Levi subgroups of $\cG$, and let $\fL_\cusp(\cG)$ denote the subset of the cuspidal ones. Let $\fB(\Unip_\enh(\cG))$ be the set of $\cG$-conjugacy classes of triples $\ft:=(\cM,v,\rho_\cusp)$, 
where $\cM$ is a cuspidal quasi-Levi subgroup of $\cG$, and $(v,\rho_\cusp)$ is a cuspidal unipotent pair in $\cM$. 
 Let $\ft\in\fB(\Unip_\enh(\cG))$. We set $W_{\ft}:=\Nor_{\cG}(\ft)/\cM$. 
In  \cite[\S4]{AMS1} we constructed a bijection
\begin{equation}  \label{eqn:GSC}
\nu_\cG\colon\bigsqcup_{\ft\in\fB(\Unip_\enh(\cG))} \Irr(\Qlbar[W_{\ft},\kappa_\ft])\,\overset{1-1}{\longrightarrow}\,\Unip_\enh(\cG),
\end{equation}
where $\kappa_{\ft}\colon W_{\ft}/W_{\ft}^\circ
\times W_{\ft}/W_{\ft}^\circ\to\Qlbar^\times$
is a $2$-cocycle and $\Qlbar[W_{\ft},\kappa_\ft]$ is the $\kappa_{\ft}$-twisted group algebra of $W_{\ft}$, which is defined to be the $\Qlbar$-vector 
space $\Qlbar[W_{\ft},\kappa_\ft]$ with basis $\{ T_w \,:\, w \in W_{\ft} \}$ 
and multiplication rules
\begin{equation}\label{eq:1.2}
T_wT_{w'} = \kappa_\ft (w,w') T_{ww'} \quad
\text{for any $w,w' \in W_{\ft}$.}
\end{equation}

We will define a canonical map, called the \textit{cuspidal support} map, 
\begin{equation} \label{eqn:Scmap}
\Scmap_{\cG}\colon \Unip_\enh(\cG)\longrightarrow \fB(\Unip_\enh(\cG)),
\end{equation}
and analyse its fibers. We first consider the $\cG^\circ$-case.
Given a pair $(u,\rho^\circ)$, by \cite[\S~6.2]{LuIC}, there exists a triple $(\cP,\cL,v)$ as above and a cuspidal irreducible representation $\rho_{\cusp}^\circ$ of $A_{\cL}(v)$ such that $\langle \rho^\circ\otimes\widehat\rho_{\cusp}^\circ,\sigma_{u,v}\rangle_{A_{\cG}(u)\times A_\cL(v)}\ne 0$,
where $\widehat\rho_{\cusp}^\circ$ is the dual of $\rho_{\cusp}^\circ$. Moreover $\ft:=(\cP,\cL,v,\rho_{\cusp}^\circ)$ is unique up to $\cG^\circ$-conjugation (see 
\cite[Prop. 6.3]{LuIC}), and is called the \textit{cuspidal support} of $(u,\rho^\circ)$. We define $\Scmap_{\cG^\circ}$ as
\begin{equation} \label{eqn:Scmap0}
\begin{matrix}
\Scmap_{\cG^\circ}\colon& \Unip_\enh(\cG^\circ)&\longrightarrow& \fB(\Unip_\enh(\cG^\circ))\cr
&(u,\rho^\circ)&\mapsto&(u,\rho^\circ)_{\cG^\circ}.
\end{matrix}
\end{equation}
We consider now the $\cG$-case. We set $\cM:=\rZ_{\cG}(\rZ_\cL^\circ)$. Let $\cC_\cL(v)$ denote the $\cL$-conjugacy class of $v$ and $\cE_{\rho_\cusp^\circ}$ the cuspidal $\cL$-equivariant local system on $\cC_\cL(v)$ corresponding to $\rho_\cusp^\circ$.

We denote by $\cC_\cM(v)$ the $\cM$-conjugacy class of $v$ and write $\cX_v:=\cC_\cM(v)\rZ_{\cL}^\circ$.  It is a union of $\cM$-conjugacy classes in $\cL$.  Tensoring $\cE_{\rho_\cusp^\circ}$ with the constant sheaf on $\rZ_{\cL}^\circ$, we obtain a $\cL$-equivariant local system on $\cX_v$ that we still denote by $\cE_{\rho_\cusp^\circ}$.
Next we build a $\cM$-equivariant local system on $\cX_v$ as follows.
Via the map
\begin{equation}
    \cX_v\times \cM\to \cX_v,\quad (x,m)\mapsto m^{-1}xm,
\end{equation}
we pull $\cE_{\rho_\cusp^\circ}$ back to a local system $\widehat\cE_{\rho_\cusp^\circ}$ on $\cX_v\times\cM$. It is $\cM\times\cL$-equivariant for the action
\begin{equation}
(m_1,l)\cdot(x,m):=(m_1xm_1^{-1},m_1ml^{-1}). 
\end{equation}
Since the $\cM^\circ$-action is free, we can divide it out and obtain a $\cM$-equivariant local system $\widetilde\cE_{\rho_\cusp^\circ}$ on $\cX_v\times\cM/\cM^\circ$ the pull-back of which under the natural quotient map is isomorphic to $\widehat\cE_{\rho_\cusp^\circ}$. Let $\pr\colon \cX_v\times\cM/\cM^\circ\to \cX_v$ be the projection on the first coordinate. It is an $\cM$-equivariant fibration, so the direct image $\pr_*\widetilde\cE_{\rho_\cusp^\circ}$ is a $\cM$-equivariant local system on $\cX_v$. By \cite[Prop.~4.5]{AMS1}, there exists a canonical isomorphism 
\begin{equation} \label{eqn:f}
f\colon\End_{\cG}(\pr_*\widetilde\cE_{\rho_\cusp^\circ})\overset{\sim}{\to}\CC[W_\ft,\kappa_\ft],
\end{equation} and  $\End_{\cG}(\pr_*\widetilde\cE_{\rho_\cusp^\circ})$ contains naturally $\End_{\cM}(\pr_*\widetilde\cE_{\rho_\cusp^\circ})$ as a subalgebra. As $\cM/\cM^\circ$ is normal in $W_\ft$, the latter group acts on $\End_{\cM}(\pr_*\widetilde\cE_{\rho_\cusp^\circ})$ by conjugation in $\CC[W_\ft,\kappa_\ft]$. Let $E_{u,\rho}\in\Irr(\End_{\cG}(\pr_*\widetilde\cE_{\rho_\cusp^\circ}))$ which corresponds to 
$(u,\rho)$ by the generalized Springer representation, i.e. $E_{u,\rho}$ is the fiber of $(u,\rho)$ under $\nu_\cG\circ f$, where $\nu_\cG$ is the bijection defined in \eqref{eqn:GSC}. Let $E'$ be a constituent of $E_{u,\rho}$ as an  $\End_{\cM}(\pr_*\widetilde\cE_{\rho_\cusp^\circ})$-representation. We write $\rho_\cusp:=(\pr_*\widetilde\cE_{\rho_\cusp^\circ})_{E'}$. It is a cuspidal irreducible representation  of the group $A_\cM(v)$.
The \textit{cuspidal support} of $(u,\rho)$ is defined to the triple $(\cM,v,\rho_{\cusp})$ (see \cite[(65)]{AMS1}). It is unique up to $\cG$-conjugation.

\subsection{The ordinary Springer correspondence} \label{subsec:Springer}
The bijection $\nu_\cG$ extends to $\cG$ the generalized Springer correspondence for $\cG^\circ$ defined by Lusztig in \cite{LuIC}. The latter itself extends the Springer correspondence $\nu_{\cG^\circ}^\ord$ (sometimes called the \textit{ordinary} Springer correspondence) defined in \cite[\S6]{Sp} and \cite{Lus-Springer}. 

We recall briefly the construction of the map $\nu_{\cG^\circ}^\ord$. 
Let $\cB_u$ be the variety of Borel subgroups of $\cG^\circ$ which contain $u$. According to Spaltenstein, all irreducible components of $\cB_u$ have same dimension, say $e(u)$. The Weyl group $W_{\cG^\circ}$ of $\cG^\circ$ acts on the top cohomology group $H^{2e(u)}(\cB_u,\Qlbar)$
by permuting the irreducible components of $\cB_u$. The  actions of $W_{\cG^\circ}$ and $A_{\cG^\circ}(u)$ commute, and we obtain the irreducible representations of $W_{\cG^\circ}$ by taking the $A_{\cG^\circ}(u)$-isotypic components of $H^{2e(u)}(\cB_u,\Qlbar)$. We define $\Unip_\enh^\ord(\cG^\circ)$ to be the set of pairs $(u,\rho^\circ)$, where $u$ is a unipotent element of $\cG^\circ$ (up to conjugacy) and $\rho^\circ$ is an irreducible representation of $A_{\cG^\circ}(u)$, such that $\Hom_{A_{\cG^\circ}(u)}(\rho^\circ,H^{2e(u)}(\cB_u,\Qlbar))\ne 0$.  
We obtain a bijection, say $\nu_{\cG^\circ}^\ord$, from the set of all irreducible representations of $W_{\cG^\circ}$ to $\Unip_\enh^\ord(G)$.
In particular,  we have
\begin{equation} \label{eqn:Springer}
\text{$(u,1)_{\cG^\circ}\in\nu_{\cG^\circ}^\ord(\Irr(W_{\cG^\circ}))$, for any $u\in\Unip(\cG^\circ)$.}
\end{equation}
Let $(u,\rho)\in\Unip_\enh(\cG)$. By \cite[Theorem~1.2]{AMS1}, we can write $\rho=\rho^\circ\rtimes\eta$,  with $\rho^\circ\in\Irr(A_{\cG^\circ}(u))$. Moreover, where $\rho^\circ$ is uniquely determined by $\rho$ up to $A_{\cG}(u)$-conjugacy.
We extend the definition of $\Unip_\enh^\ord(\cG^\circ)$ to the case of disconnected groups by setting
\begin{equation} \label{eqn:ordinary}
\Unip_\enh^\ord(\cG):=\left\{(u,\rho)\,:\, \text{$(u,\rho^\circ)\in \Unip_\enh^\ord(\cG^\circ)$}\right\}/\cG.
\end{equation}
\begin{defn} \label{defn:ns-triple}
A triple $\ft=(\cM,v,\rho_\cusp)\in\fB(\Unip_\enh(\cG))$ is  called \textit{semisimple} if the unipotent element $v$ is trivial.
\end{defn}
\begin{lem} \label{lem:v=1}
The semisimple triples in $\fB(\Unip_\enh(\cG))$ are those of the form $(\cM,1,\eta)$, where $\cM= \rZ_\cG(\cT)$ with $\cT$  a maximal torus  of $\cG^\circ$, and $\eta\in\Irr(\cM/\cM^\circ)$.
\end{lem}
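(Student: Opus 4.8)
The plan is to unwind the definitions of \S\ref{subsec:GSC} in the case of a trivial unipotent element; the crux will be the assertion that $(1,\mathbf 1)$ is a cuspidal unipotent pair in a connected reductive group $\cH$ if and only if $\cH$ is a torus. First I would record the elementary computations at $u=1$: for any (possibly disconnected) reductive group $\cH$ one has $\rZ_\cH(1)=\cH$, hence $A_\cH(1)=\cH/\cH^\circ$, and in particular $A_{\cH^\circ}(1)$ is trivial. Consequently, if $\cM$ is a quasi-Levi subgroup of $\cG$, a unipotent pair of the form $(1,\rho)$ in $\cM$ has $\rho\in\Irr(\cM/\cM^\circ)$, and since the restriction of $\rho$ to the trivial group $A_{\cM^\circ}(1)$ is a sum of trivial characters, $(1,\rho)$ is cuspidal in the sense of \S\ref{subsec:GSC} precisely when $(1,\mathbf 1)$ is a cuspidal unipotent pair in the connected reductive group $\cM^\circ$.

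Next I would prove the crux by applying the definition of cuspidality to the connected group $\cH:=\cM^\circ$ and the element $v=1$. For a parabolic subgroup $\cP_0$ of $\cH$ with Levi factor $\cM_0$ and unipotent radical $\cU$, and a unipotent element $w\in\cM_0$, the variety $\cY_{1,w}=\{y\in\cH\,:\,y^{-1}\cdot 1\cdot y\in w\cU\}$ is empty unless $w=1$ (because $\cM_0\cap\cU=\{1\}$), in which case $\cY_{1,1}=\cH$ is irreducible, so $\sigma_{1,1}$ is the trivial representation of the trivial group $A_\cH(1)\times A_{\cM_0}(1)$. Hence $\langle\mathbf 1,\sigma_{1,1}\rangle_{A_\cH(1)}\neq 0$ for the admissible choice $w=1$ inside any parabolic $\cP_0$ of $\cH$; by the definition of cuspidality this forces $\cP_0=\cH$, i.e. $\cH$ has no proper parabolic subgroup, i.e. $\cH$ is a torus. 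Conversely, if $\cH$ is a torus its only parabolic subgroup is $\cH$ itself, so $(1,\mathbf 1)$ is cuspidal vacuously.

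Combining these, a semisimple triple $(\cM,1,\rho_\cusp)\in\fB(\Unip_\enh(\cG))$ has $\cM=\rZ_\cG(\rZ_\cL^\circ)$ a cuspidal quasi-Levi subgroup with $\cM^\circ=\cL$ a Levi subgroup of $\cG^\circ$, and cuspidality of $(1,\rho_\cusp)$ forces $\cL$ to be a torus; since every Levi subgroup of $\cG^\circ$ contains a maximal torus of $\cG^\circ$, the group $\cL$ is then itself a maximal torus $\cT$ of $\cG^\circ$, so $\cM=\rZ_\cG(\rZ_\cT^\circ)=\rZ_\cG(\cT)$ and $\rho_\cusp\in\Irr(\cM/\cM^\circ)$. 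Conversely, given a maximal torus $\cT$ of $\cG^\circ$, it is the Levi factor of a Borel subgroup, so $\cM:=\rZ_\cG(\cT)=\rZ_\cG(\rZ_\cT^\circ)$ is a quasi-Levi subgroup with $\cM^\circ=\rZ_{\cG^\circ}(\cT)=\cT$; for any $\eta\in\Irr(\cM/\cM^\circ)=\Irr(A_\cM(1))$ the pair $(1,\eta)$ is cuspidal in $\cM$ since $\cT$ is a torus, so $(1,\eta)$ is a cuspidal unipotent pair in the cuspidal quasi-Levi subgroup $\cM$ and $(\cM,1,\eta)$ is a semisimple triple in $\fB(\Unip_\enh(\cG))$. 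This yields the asserted description.

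The only step that demands genuine care is the computation of $\cY_{1,1}$ and $\sigma_{1,1}$: one must check that the irreducibility of $\cY_{1,1}$ together with the vanishing of the relevant component groups really makes $\langle\mathbf 1,\sigma_{1,1}\rangle$ nonzero against every parabolic $\cP_0$, which is exactly what destroys cuspidality unless $\cH$ is a torus. The remaining ingredients --- namely $\cM^\circ=\cL$ for a quasi-Levi, the fact that Levi subgroups of $\cG^\circ$ have full rank, and $\rZ_{\cG^\circ}(\cT)=\cT$ for $\cT$ a maximal torus --- are standard structure theory, so I do not expect them to cause difficulty.
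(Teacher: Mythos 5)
Your proof is correct, and on the one step that carries real content it takes a different (more self-contained) route than the paper. Where you compute $\cY_{1,w}$ explicitly --- observing that it is empty for $w\neq 1$ and equal to all of the connected group $\cH$ for $w=1$, so that $\sigma_{1,1}$ is the trivial representation of the trivial group and pairs nontrivially with $\mathbf 1$ against every proper parabolic --- the paper instead quotes Lusztig's general result \cite[Prop.~2.8]{LuIC} that a cuspidal pair $(v,\rho_\cusp)$ forces $v$ to be a \emph{distinguished} unipotent element of $\cL$, and then notes that the identity is distinguished only in a torus. The two arguments establish the same fact; yours is a direct verification of the special case $v=1$ from the definition of cuspidality recalled in \S\ref{subsec:GSC}, which costs a short computation but removes the external citation, while the paper's is shorter at the price of invoking a stronger general theorem. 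You also streamline the identification of the enhancement: the paper passes through the decomposition $\rho_\cusp=\rho_\cusp^\circ\rtimes\eta$ of \cite[(51)]{AMS1} and checks that the relevant $2$-cocycle is trivial, whereas you note directly that $A_{\cM^\circ}(1)$ is trivial, so that cuspidality of $(1,\rho_\cusp)$ in the disconnected group $\cM$ reduces by definition to cuspidality of $(1,\mathbf 1)$ in $\cM^\circ$ and $\rho_\cusp$ is simply an irreducible representation of $\cM/\cM^\circ$. Both treatments of the converse inclusion (a maximal torus is a Levi subgroup, $\rZ_{\cG^\circ}(\cT)=\cT$, and the trivial representation of the trivial component group is cuspidal) agree.
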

\begin{proof}
Let $\cT$  be a maximal torus  of $\cG^\circ$  and let  $\eta\in\Irr(\cM/\cM^\circ)$, where $\cM:=\rZ_\cG(\cT)$. By definition the restriction of $\eta$ to $\cM^\circ=\cT$ is the trivial representation of $\cM^\circ$, which is obviously cuspidal. Hence the representation $\eta$ is cuspidal. 
Thus, the triple $(\cM,1,\eta)$ belongs to $\fB(\Unip_\enh(\cG))$. It is semisimple by definition.

Conversely, let $\ft=(\cM,v,\rho_\cusp)\in\fB(\Unip_\enh(\cG))$, where $\cM=\rZ_{\cG}(\rZ_{\cL}^\circ)$ with $\cL$ a Levi subgroup of $\cG^\circ$ and $v$ the trivial element of $\cL$.  By \cite[Prop.~2.8]{LuIC}, the fact that  $(v,\rho_\cusp)$ is cuspidal implies that  $v$ is a distinguished unipotent element in $\cL$, that is, $v$ does not meet the unipotent variety of  any proper Levi subgroup  of $\cL$. Since $v$ is trivial, it is only possible if $\cL$ does not have proper Levi subgroups, that is, $\cL$ is a torus, say $\cT$. 
Thus we have $\rZ_{\cL}=\cT$, which gives $\cM=\rZ_\cG(\cT)$. Since $\rZ_\cM(v)=\cM$, we get $A_\cM(v)=\cM/\cM^\circ$.  Let $\rho\in\Irr(A_{\cG}(u))$. 
By \cite[(51)]{AMS1}, we can write 
\begin{equation}
\rho_\cusp=\rho_\cusp^\circ\rtimes\eta:=\ind_{A_{\cM}(v)_{\rho^\circ_\cusp}}^{A_{\cM}(v)}(\rho^\circ\otimes\eta),
\end{equation}
with $\rho^\circ_\cusp\in\Irr(A_{\cM^\circ}(v))$ and $\eta\in\Irr(\CC[A_{\cM}(v)_{\rho_\cusp^\circ}/A_{\cM^\circ}(v),\kappa'])$,   
where $A_{\cM}(v)_{\rho^\circ_\cusp}$ denotes the stabilizer of $\rho^\circ_\cusp$ in $A_{\cG}(u)$, and  $\kappa'\colon
A_{\cM}(v)_{\rho^\circ_\cusp}/A_{\cM}(v)^\circ\to\Qlbar^\times$ is a certain $2$-cocycle.
Since $v$ is trivial, we have
$A_{\cM^\circ}(v)=\{1\}$. Hence $\rho^\circ_\cusp$ is the trivial representation  (which is cuspidal) of the trivial group $A_{\cM^\circ}(v)$, and we have
$A_{\cM}(v)_{\rho^\circ_\cusp}=A_{\cM}(v)=\cM/\cM^\circ$. It gives
$\rho_\cusp=\eta$ with $\eta\in\Irr(\Qlbar[\cM/\cM^\circ,\kappa'])$, where  $\kappa'\colon
\cM/\cM^\circ\to\Qlbar^\times$ is a $2$-cocycle. But, by \cite[(25)]{AMS1}, the cocycle $\kappa'$ is trivial, since $\rho^\circ_\cusp$ is the trivial representation.
\end{proof}

\begin{prop} \label{prop:key}
The set $\fU_\enh^\ord(\cG)$ is the set of $\cG$-conjugacy classes of pairs $(u,\rho)$ with semisimple cuspidal support.
\end{prop}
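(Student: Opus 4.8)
The plan is to reduce the assertion to the connected group $\cG^\circ$, where it amounts to the classical fact that the ordinary Springer correspondence is precisely the block of Lusztig's generalized Springer correspondence attached to the trivial cuspidal local system on a maximal torus.

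First I would record that the passage from $\cG$ to $\cG^\circ$ preserves semisimplicity of the cuspidal support. By the construction of $\Scmap_\cG$ in \S\ref{subsec:GSC}, given $(u,\rho)\in\Unip_\enh(\cG)$ and a constituent $\rho^\circ$ of $\rho|_{A_{\cG^\circ}(u)}$ (as in \cite[Theorem~1.2]{AMS1}), the triples $\Scmap_{\cG^\circ}(u,\rho^\circ)=(\cL,v,\rho_\cusp^\circ)$ and $\Scmap_\cG(u,\rho)=(\cM,v,\rho_\cusp)$ share the same unipotent element $v$: indeed $\cM=\rZ_\cG(\rZ_\cL^\circ)$ and $\rho_\cusp$ is manufactured from $\rho_\cusp^\circ$ by the twisted-group-algebra procedure of \S\ref{subsec:GSC}, which leaves $v$ untouched. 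Although $\rho^\circ$ is only well defined up to $A_\cG(u)$-conjugacy, the conjugacy class of $v$, hence the condition $v=1$, is independent of that choice. By Lemma~\ref{lem:v=1} a triple in $\fB(\Unip_\enh(\cG))$ (resp. in $\fB(\Unip_\enh(\cG^\circ))$) is semisimple exactly when $v=1$; consequently $\Scmap_\cG(u,\rho)$ is semisimple if and only if $\Scmap_{\cG^\circ}(u,\rho^\circ)$ is.

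Next I would dispose of the connected case. By \cite{LuIC} (see also \S\ref{subsec:Springer}), $\nu_{\cG^\circ}$ partitions $\Unip_\enh(\cG^\circ)$ into the fibers $\Scmap_{\cG^\circ}^{-1}(\ft^\circ)$, each in bijection with $\Irr(W_{\ft^\circ})$, and the fiber over $\ft^\circ=(\cT,1,\triv)$ --- where $W_{\ft^\circ}=\Nor_{\cG^\circ}(\cT)/\cT=W_{\cG^\circ}$ and the cohomological model is the top degree $H^{2e(u)}(\cB_u,\Qlbar)$ of the Springer fibers --- is exactly the image of the ordinary Springer correspondence $\nu_{\cG^\circ}^\ord$, i.e. $\Unip_\enh^\ord(\cG^\circ)$. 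On the other hand, Lemma~\ref{lem:v=1} applied to $\cG^\circ$ (where $\rZ_{\cG^\circ}(\cT)=\cT$, forcing $\eta=\triv$) shows that $(\cT,1,\triv)$ is, up to conjugacy, the unique semisimple triple in $\fB(\Unip_\enh(\cG^\circ))$. Hence $(u,\rho^\circ)\in\Unip_\enh^\ord(\cG^\circ)$ if and only if $\Scmap_{\cG^\circ}(u,\rho^\circ)$ is semisimple.

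Finally I would assemble the pieces: by the definition \eqref{eqn:ordinary}, $(u,\rho)\in\Unip_\enh^\ord(\cG)$ iff $(u,\rho^\circ)\in\Unip_\enh^\ord(\cG^\circ)$, iff $\Scmap_{\cG^\circ}(u,\rho^\circ)$ is semisimple, iff $\Scmap_\cG(u,\rho)$ is semisimple, which is the claim. The step I expect to carry the real content is the identification of $\Unip_\enh^\ord(\cG^\circ)$ with the $(\cT,1,\triv)$-block of the generalized Springer correspondence; everything else is bookkeeping with the definitions of \S\ref{subsec:GSC} and \S\ref{subsec:Springer} together with Lemma~\ref{lem:v=1}.
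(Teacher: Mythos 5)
Your argument is correct and follows essentially the same route as the paper's proof: the substance in both is that Lusztig's generalized Springer correspondence restricted to the block of the trivial cuspidal pair on a maximal torus is the ordinary Springer correspondence, combined with Lemma~\ref{lem:v=1} to identify the semisimple triples. Your write-up merely makes explicit the reduction from $\cG$ to $\cG^\circ$ (well-definedness in the choice of $\rho^\circ$ and the fact that $\Scmap_\cG(u,\rho)$ and $\Scmap_{\cG^\circ}(u,\rho^\circ)$ share the same $v$), which the paper leaves implicit.
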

\begin{proof}
Let $(u,\rho)_{\cG}\in \fU_\enh(\cG)$. Let $\ft=(\cM,v,\rho_\cusp)$ be the cuspidal support of   $(u,\rho)_\cG$. The generalized Springer correspondence constructed in \cite{LuIC}  reduces to the ordinary Springer correspondence precisely when $\cL$ is a maximal torus of $\cG^\circ$. Thus  $(u,\rho)_\cG$ belongs to $\fU_\enh^\ord(\cG)$ if and only if $\cM= \rZ_{\cG}(\cT)$ with $\cT$ a maximal torus of $\cG$.  But then, since $v$ is a unipotent element  of $\cT$, it has to be trivial. It follows that $A_{\cT}(v)=\{1\}$,  and  hence $\rho_\cusp=1$. So $\ft=(\cM,1,\eta)$, where $\eta\in\Irr(\cM/\cM^\circ)$.  Then the result follows from Lemma~\ref{lem:v=1}.
\end{proof}

\subsection{Enhanced Langlands parameters} \label{subsec:eLp}
Let $G^\vee$ denote the Langlands dual group of $G$, i.e.~the complex Lie group  with root datum dual to that of $G$.
The $L$-group of $G$ is defined to be ${}^LG:=G^\vee\rtimes W_F$.  Let $W_F':=W_F\times\SL_2(\CC)$ be the Weil-Deligne group of $F$.
A \textit{Langlands parameter} (or {$L$-parameter}) for $G$ and its inner twists is a continuous morphism $\varphi\colon W_F'\to {}^LG$ such that 
$\varphi(w)$ is semisimple for each $w\in W_F$, the restriction $\varphi|_{\SL_2(\CC)}$ is a morphism of complex algebraic groups, and $\varphi$ commutes to the projections to $W_F$.
The group $G^\vee$ acts on the set of $L$-parameters. We denote by $\Phi(G)$ the set of $G^\vee$-classes of $G$-relevant $L$-parameters as defined in \cite[\S3]{Bor} (see also \cite[\S2.5]{SZ}).

We define an action of $G^\vee_\sconn$ on $G^\vee$ by setting 
\[h\cdot g:=h'gh^{\prime -1}\quad \text{for $h\in G^\vee_\sconn$ and $g\in G^\vee$, where $p(h)=h'\rZ_{G^\vee}$.}\]
It induces an action of $G^\vee_\sconn$ on ${}^LG$ and we denote by $\rZ_{G^\vee_\sconn}(\varphi)$ the stabilizer in $G^\vee_\sconn$ of $\varphi(W_F')$ for this action.

We attach to each $L$-parameter $\varphi$ several (possibly disconnected) complex reductive groups as follows. 
We set $\rZ_{G^\vee}(\varphi):=\rZ_{G^\vee}(\varphi(W'_F))$ and denote by $\rZ_{G^\vee_\sconn}^1(\varphi)$ the inverse image of $\rZ_{G^\vee}(\varphi)/(\rZ_{G^\vee}(\varphi)\cap \rZ_{G^\vee})$ (viewed as a subgroup of $G^\vee_\ad$) under the quotient map $G^\vee_\sconn\to G^\vee_{\ad}$. We define $\varphi|_{W_F}$ to be the restriction along the embedding of $W_F$ as the first factor of $W'_F$: 
\begin{equation} \label{eqn:restrict}
\begin{matrix}\varphi|_{W_F}\colon& W_F&\to& {}^LG\cr
& w&\mapsto &\varphi(w,1)\end{matrix}.
\end{equation}
We set $\rZ_{G^\vee}(\varphi|_{W_F}):=\rZ_{G^\vee}(\varphi(W_F))$ and denote by $\rZ_{G^\vee}^1(\varphi|_{W_F})$ the inverse image of $\rZ_{G^\vee}(\varphi|_{W_F})/(\rZ_{G^\vee}(\varphi|_{W_F})\cap \rZ_{G^\vee})$ in $G^\vee_\sconn$. We set
\begin{equation} \label{eqn:Gphi}
 \cG_\varphi:=\rZ_{G^\vee}^1(\varphi|_{W_F})
\quad\text{and}\quad
S_\varphi:=\rZ_{G^\vee}^1(\varphi)/(\rZ_{G^\vee}^1(\varphi))^\circ.
\end{equation}
We denote by $u_\varphi$ the image of 
$\left(1,\left(\begin{smallmatrix} 1&1\cr 0&1\end{smallmatrix}\right)\right)$ under $\varphi$.   By \cite[(92)]{AMS1}, we have $u_\varphi\in \cG_\varphi^\circ$ and
$S_\varphi\simeq A_{\cG_\varphi}(u_\varphi)$, where $u_\varphi:=\varphi\left(1,\left(\begin{smallmatrix}
1&1\cr
0&1
\end{smallmatrix}\right)\right)$.

An \textit{enhancement} of $\varphi$ is an irreducible representation $\rho$ of $S_\varphi$. Pairs $(\varphi,\rho)$ are called \textit{enhanced $L$-parameters} (for $G$ and its inner twists). Let $G^\vee$ act on the set of enhanced $L$-parameters  via
\begin{equation} \label{eqn:Gvee-action}
g\cdot (\varphi,\rho) = (g \varphi g^{-1},g \cdot \rho).
\end{equation}
We denote by $\Phi_\enh(G)$ the set of $G^\vee$-conjugacy classes of enhanced $L$-parameters.

An $L$-parameter $\varphi$ is said to be \textit{discrete} if $\varphi(W_F')$ is not contained in any proper Levi subgroup of ${}^LG$.
An enhanced $L$-parameter $(\varphi,\rho)\in\Phi_\enh(G)$ is called \textit{cuspidal} if $\varphi$ is discrete and $(u_\varphi,\rho)$ is a cuspidal unipotent pair in $\cG_\varphi$, as defined in \S\ref{subsec:GSC}.
We denote by $\Phi_{\enh,\cusp}(G)$ the subset of $\Phi_{\enh}(G)$ consisting of  $G^\vee$-conjugacy classes of cuspidal enhanced $L$-parameters.

In \cite[Section 7]{AMS1}, we attached to every enhanced $L$-parameter $(\varphi, \rho)$ its \textit{cuspidal support} $\Sc(\varphi,\rho)$. We recall below how it is defined. 
Let $\ft:=(\cM,v,\rho_\cusp)$ denote the cuspidal support of $(u_\varphi,\rho)$, as defined in \ref{subsec:GSC}.
In particular, $(v,\rho_\cusp)$ is a cuspidal unipotent pair in $\cM$.
Upon conjugating $\varphi$ with a suitable element of $\rZ_{\cG_\varphi^\circ}(u_\varphi)$, we may assume that the identity component $\cL$ of $\cM$ contains $\varphi\left(\left(1, \left(\begin{smallmatrix} z&0\cr 0&z^{-1}
\end{smallmatrix}\right)\right)\right)$ for all $z\in\CC^\times$. 
Recall that by the Jacobson–Morozov theorem (see for example \cite[Theorem~3.7.1]{Chriss-Ginzburg}), any unipotent element $v$ of $\cM$ can be extended to a homomorphism of algebraic groups
\begin{equation} \label{eqn:JM}
j_v\colon \SL_2(\CC)\to \cL\text{ satisfying }j_v\left(\begin{smallmatrix}1&1\\0&1\end{smallmatrix}\right)=v.
\end{equation}
By \cite[Prop.~3.7.3]{Chriss-Ginzburg}, the homomorphism $j_{v}$ is uniquely determined up to conjugation by an element of the unipotent radical of $\rZ_{\cL}(v)$. In particular, $j_1$ is the identity map. A homomorphism $j_v$ satisfying \eqref{eqn:JM} will be said to be \textit{adapted to $\varphi$}. 

By \cite[Lem.~7.6]{AMS1}, up to $G^\vee$-conjugacy, there exists a unique homomorphism $j_{v}\colon \SL_2(\CC)\to \cL$ which is adapted to $\varphi$, and moreover, the cocharacter
\begin{equation} \label{eqn:cocharacter}
\chi_{\varphi,v}\colon z\mapsto \varphi\left(1, \left(\begin{smallmatrix} z&0\cr 0&z^{-1}
\end{smallmatrix}\right)\right)\cdot j_v\left(\begin{smallmatrix} z^{-1}&0\cr 0&z
\end{smallmatrix}\right)
\end{equation}
has image in $\rZ_{\cL}^\circ$.
We define an $L$-parameter $\varphi_{v} \colon W_F \times\SL_2(\CC) \to\rZ_{G^\vee}(\rZ_{\cL}^\circ)$ by
\[\varphi_v(w,x):= \varphi(w,1)\cdot\chi_{\varphi,v}(|\!|w|\!|^{1/2})\cdot j_v(x)\quad\text{for any $w\in W_F$ and any $x\in \SL_2(\CC)$.}\]
The \textit{cuspidal support} of $(\varphi,\rho)$ is defined to be
\begin{equation} \label{eqn:Sc Galois side}
\Sc(\varphi,\rho):=(\rZ_{{}^LG}(\rZ_{\cL}^\circ),(\varphi_{v},\rho_\cusp))_{G^\vee}.
\end{equation}
By \cite[Prop.~7.3]{AMS1}, we can always represent $(\rZ_{{}^LG}(\rZ_{\cL}^\circ),\varphi_{L},\rho_\cusp)_{G^\vee}$ by  a triple of the form   $(L^\vee\rtimes W_F,\varphi_{L},\rho_L)$, where $L$  is a Levi subgroup of $G$ and $(\varphi_{L},\rho_L)$ is a cuspidal enhanced $L$-parameter for $L$.
The cuspidal support of  $(\varphi,\rho)\in\Phi_\enh(G)$ is said to be \textit{semisimple} if $v=1$.

\begin{defn} \label{defn:ss}
An enhanced $L$-parameter $(\varphi,\rho)\in\Phi_\enh(G)$ is called \textit{non-singular} if its  cuspidal support  is semisimple.
\end{defn}
 We recall that an $L$-parameter $\varphi_L\colon W'_F\to{}^LL$ is said to be \textit{supercuspidal} if it is discrete and has trivial restriction to $\SL_2(\CC)$.
Thus, an enhanced $L$-parameter $(\varphi,\rho)\in\Phi_\enh(G)$ is non-singular if and only if its cuspidal support is of the form $(L^\vee\rtimes W_F,\varphi_{L},\rho_L)_{G^\vee}$, with $\varphi_L$  a supercuspidal $L$-parameter for a Levi subgroup $L$ of $G$.

\begin{thm} \label{thm:Springer}
An  enhanced $L$-parameter $(\varphi,\rho)\in\Phi_\enh(G)$ is 
non-singular if and only if $(u_\varphi,\rho)_{\cG_\varphi}\in\fU^\ord_\enh(\cG_\varphi)$.
\end{thm}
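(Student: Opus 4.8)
\emph{Proof proposal.} The plan is to derive the statement directly from Proposition~\ref{prop:key}, applied to the (possibly disconnected) complex reductive group $\cG=\cG_\varphi$; essentially the only work is unwinding the definitions on the Galois side. First I would recall, from the construction recalled just before Definition~\ref{defn:ss}, that the cuspidal support $\Sc(\varphi,\rho)$ is built from the triple $\ft=(\cM,v,\rho_\cusp)$ which, by definition, is the cuspidal support of the pair $(u_\varphi,\rho)$ computed inside $\cG_\varphi$ in the sense of \S\ref{subsec:GSC}. Here one uses that $\rho\in\Irr(S_\varphi)$, that $u_\varphi\in\cG_\varphi^\circ$, and the isomorphism $S_\varphi\simeq A_{\cG_\varphi}(u_\varphi)$, so that $(u_\varphi,\rho)$ genuinely defines a class in $\Unip_\enh(\cG_\varphi)$ and $\Scmap_{\cG_\varphi}(u_\varphi,\rho)=\ft$. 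By Definition~\ref{defn:ss} together with the sentence preceding it, $(\varphi,\rho)$ is non-singular if and only if $v=1$; and by Definition~\ref{defn:ns-triple} the condition $v=1$ says exactly that $\ft$ is a semisimple triple in $\fB(\Unip_\enh(\cG_\varphi))$, i.e. that $(u_\varphi,\rho)_{\cG_\varphi}$ has semisimple cuspidal support.

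Next I would apply Proposition~\ref{prop:key} with $\cG=\cG_\varphi$: it asserts that $\fU^\ord_\enh(\cG_\varphi)$ is precisely the set of $\cG_\varphi$-conjugacy classes of pairs with semisimple cuspidal support. Combining this with the previous step gives the chain of equivalences
\[
(\varphi,\rho)\ \text{non-singular}\iff v=1\iff (u_\varphi,\rho)_{\cG_\varphi}\ \text{has semisimple cuspidal support}\iff (u_\varphi,\rho)_{\cG_\varphi}\in\fU^\ord_\enh(\cG_\varphi),
\]
which is the assertion of the theorem.

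The main point requiring care — and essentially the only obstacle — is bookkeeping: checking that the unipotent element $v$ occurring in $\Sc(\varphi,\rho)=(\rZ_{{}^LG}(\rZ_{\cL}^\circ),(\varphi_v,\rho_\cusp))_{G^\vee}$ is literally the same $v$ as the one in the triple $\Scmap_{\cG_\varphi}(u_\varphi,\rho)$, and that $\cL=\cM^\circ$; both are immediate from the recollection of the definitions in \S\ref{subsec:eLp}. There is no geometric input beyond what already goes into Proposition~\ref{prop:key} (whose proof contains the substantive observation that a trivial unipotent element of a torus forces the support Levi to be a torus, which is exactly the case where the generalized Springer correspondence collapses to the ordinary one). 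Hence the theorem is in effect a corollary of Proposition~\ref{prop:key}.
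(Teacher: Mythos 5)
Your argument is exactly the paper's: the theorem is stated there as an immediate consequence of Proposition~\ref{prop:key}, and your unwinding of the definitions (identifying the $v$ in $\Sc(\varphi,\rho)$ with the $v$ in $\Scmap_{\cG_\varphi}(u_\varphi,\rho)$ and invoking Definition~\ref{defn:ns-triple}) is precisely the bookkeeping the paper leaves implicit. The proposal is correct and matches the paper's proof.
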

\begin{proof} It follows immediately from Proposition~\ref{prop:key}.
\end{proof}

\subsection{A Bernstein-type decomposition on the Galois side}\label{subsec:Bdecomp}
Let $\bL$ be a Levi subgroup defined over $F$ of a parabolic subgroup of $\bG$ defined over $F$ and let $L^\vee$ denote the $\CC$-points of the Langlands dual group of $\bL$, 
We define
\begin{equation} \label{eqn:Xnrdual}
\fX_\nr({}^LL):=(\rZ_{L^\vee}^{I_F})_{W_F}^\circ.
\end{equation}
The group $\fX_\nr({}^LL)$ is naturally isomorphic to the group $\fX_\nr(L)$ (see \cite[p.16]{Haines}). We denote the isomorphism $\fX_\nr(L)\xrightarrow{\sim} \fX_\nr({}^LL)$ by $\chi\mapsto\chi^\vee$.  We define an action of $\fX_\nr({}^LL)$ on $\Phi_\enh(L)$ as follows. 
Given $(\varphi,\rho)\in\Phi_\enh(L)$ and $\xi\in \fX_\nr({}^LL)$, we define $(\xi\varphi,\rho)\in\Phi_\enh(L)$ by 
$\xi\varphi:=\varphi$ on $I_F\times\SL_2(\CC)$ and  $(\xi\varphi)(\Frob_F):=\tilde \xi \varphi(\Frob_F)$. Here $\tilde \xi\in \rZ_{L^\vee\rtimes I_F}^\circ$ represents $z$. Let $\fs^\vee:=[L^\vee,\varphi_\cus,\rho_\cus]_{G^\vee}$ be the $G^\vee$-conjugacy class of the orbit of $(\varphi_\cus,\rho_\cus)\in \Phi_\enh^\cusp(L)$ under the action of 
$\fX_\nr(L^\vee)$. Let $\fB^\vee(G)$ be the set of such $\fs^\vee$. 

By \cite[(115)]{AMS1}, the set $\Phi_\enh(G)$ is partitioned into \textit{series \`a la Bernstein} as
\begin{equation} \label{eqn:decPhi_e}
\Phi_\enh(G)=\prod_{\fs^\vee\in\fB(G^\vee)}\Phi(G)^{\fs^\vee},
\end{equation}
where for each $\fs^\vee$, the subset $\Phi_\enh(G)^{\fs^\vee}$
is defined to be the fiber of $\fs^\vee$ under the supercuspidal map $\Sc$ defined in \eqref{eqn:Sc Galois side}. 

For any $\fs=[L,\sigma]_G\in\fB(G)$, it is expected (see \cite[Conj.~2]{AMS1}), and known for $p$-adic general linear groups and for pure inner forms of quasi-split $p$-adic classical  groups (by \cite{Moussaoui-Bernstein-center, AMS4}), when $p\ne 2,3$,  for the exceptional group of type $\rG_2$ (by \cite{AX-Hecke,AX-G2}), and  for $G$ arbitrary and  $L$ a torus (by \cite{ABPS-disc, Sol-ppal}), that some suitable notion of  $\LLC$ for supercuspidal representations of $L$, denoted by $\sigma'\mapsto(\varphi_{\sigma'},\rho_{\sigma'})$, induces a bijection:
\begin{equation} \label{Bernstein-block-isom-intro}
   \begin{matrix}\cL^\fs_G\colon& \Irr^{\fs}(G)&\rightarrow&\Phi_\enh^{\fs^{\vee}}(G)\cr
  &\pi&\mapsto&(\varphi_\pi,\rho_\pi) 
  \end{matrix}
\end{equation}
where $\fs^{\vee}=[L^{\vee},\varphi_{\sigma},\rho_{\sigma}]_{G^\vee}$, and hence, by using \eqref{eqn:Bernstein decomposition} and \eqref{eqn:decPhi_e}, a bijection
\begin{equation} \label{eqn:LG}
\begin{matrix}
\cL_G\colon&\Irr(G)&\rightarrow&\Phi_\enh(G)\cr
&\pi&\mapsto&(\varphi_\pi,\rho_\pi)
 \end{matrix}.
 \end{equation}

Let $\fX_{\nr}(L,\sigma)$ denote the finite subgroup of $\fX_{\nr}(L)$ defined as
\begin{equation} 
\fX_{\nr}(L,\sigma):=\left\{\chi\in \fX_{\nr}(L)\;:\;\chi\otimes\sigma=\sigma\right\}.
\end{equation}
Let $\natural$ and ${}^L\natural$ denote the collections of $2$-cocycles, occurring respectively in \cite{Solleveld-endomorphism-algebra} and  \cite{AMS1}. In many  cases these cocycles are known to be both trivial (for instance for classical groups, as proved in \cite{AMS4}).

For every Levi subgroup $L$ of $G$, there is a canonical isomorphism  $w\mapsto w^\vee$ between $W_G(L):=\Nor_G(L)/L$ and $W_{G^\vee}(L^\vee):=\Nor_{G^\vee}(L^\vee)/L^\vee$, see \cite[Prop.~3.1]{ABPS-CM}. For $w\in W_G(L)$ and $\sigma\in\Irr_\scusp(L)$, let ${}^w\sigma$ denote the representation of $L$ defined by
${}^w\sigma(\ell):=\sigma(w^{ -1}\ell w)$. If the representation $\sigma$ is non-singular, then the representation ${}^w\sigma$ is also non-singular.

\begin{remark} \label{rem:properties}  {\rm
If the following properties are satisfied for every $\sigma'\in\fs=[L,\sigma]_G$:
\begin{equation} \label{eqn:functK}
{}^{w^\vee}(\varphi_{\sigma'},\rho_{\sigma'})=(\varphi_{{}^w\sigma'}, \rho_{{}^w\sigma'})\quad\text{for any $w\in W(L)$.}
\end{equation}
\begin{equation} \label{eqn:fcocycles}
{}^L\natural_{\chi^\vee}=\natural_\chi\quad\text{for any $\chi\in \fX_\nr(L)/\fX_{\nr}(L,\sigma)$,}
\end{equation}
then $\sigma'\mapsto(\varphi_{\sigma'},\rho_{\sigma'})$ induces a bijection $\cL^\fs_G$ from $\Irr^{\fs}(G)$ to $\Phi_\enh^{\fs^{\vee}}(G)$, where  $\fs^{\vee}=[L^{\vee},\varphi_{\sigma},\rho_{\sigma}]_{G^\vee}$ (see  \cite{AX-Hecke}).
}
\end{remark}

Moreover, as formulated in \cite[Conjecture-7.8]{AMS1}, the following diagram is expected to be commutative:
\begin{equation} \label{eqn:diagram}
\begin{tikzcd}
\Irr(G)\arrow[]{d}[swap]{\Sc}\arrow[]{r}{\cL_G}[swap]{} &\Phi_\enh(G)\arrow[]{d}{\Sc}\\
\bigsqcup_{L\in\fL(G)}\Irr_{\superc}(L)/W_G(L)\arrow[]{r}{\LLC}[swap]{1\text{-}1}& \bigsqcup_{L\in\fL(G)}\Phi_{\enh,\cusp}(L)/W_G(L).
\end{tikzcd},
\end{equation}
where $\fL(G)$ is a set of representatives for the conjugacy classes of Levi subgroups of $G$.
The commutativity of the diagram \eqref{eqn:diagram} was established   for general linear groups and for classical groups in \cite{Moussaoui-Bernstein-center, AMS4}, for $\rG_2$ in \cite{AX-G2}, and  for $G$ arbitrary and  $L$ a torus in \cite{ABPS-disc, Sol-ppal}.

\subsection{The $\LLC$ for non-singular representations} \label{subsec:LLCns}
Let $L$ be Levi subgroup of $G$.  
In \cite[Def.~4.1.2]{Kaletha-nonsingular},  a supercuspidal $L$-parameter $\varphi_L$ for $L$ is said to be \textit{torally wild} if $\varphi(P_F)$ is contained in a maximal torus of $L^\vee$. In particular, if $\varphi_L$ is torally wild then $(\rZ_{L^\vee}(\varphi(I_F)))^\circ$ is abelian. 

In \cite{Kaletha-nonsingular}, when the group $\bL$ splits over a tamely ramified  extension and $p$ is very good for $\bL$, Kaletha has constructed the $L$-packets associated to torally wild supercuspidal $L$-parameters and has attached  to each non-singular irreducible supercuspidal representation $\sigma$ of $L$ a cuspidal enhanced $L$-parameter $(\varphi_\sigma,\rho_\sigma)$ such that $\varphi_\sigma$ is a torally wild supercuspidal $L$-parameter for $L$. When $G=\GL_N(F)$ and $p$ is odd, as proved in \cite{OT}, the map $\sigma\mapsto (\varphi_\sigma,\rho_\sigma)$ coincides with the local Langlands correspondence defined by Harris and Taylor in \cite{HT} and Bushnell et Henniart in \cite{BH1,BH2}.

The connection index for $\bL$ dividing the order of Weyl group of $\bL$ (see for instance \cite[VI.2, Prop.7]{Bourbaki}), if $p$ does not divide the order of the Weyl group of $\bL$, then $p$ is very good for $\bL$. When $\bL$ splits over a tamely ramified  extension and $p$ does not divide the order of the Weyl group of $\bL$, all supercuspidal $L$-parameters are torally wild, and hence, in this case, the $L$-packets associated to all supercuspidal $L$-parameters for $L$ are constructed in \cite{Kaletha-nonsingular}: these are exactly the $L$-packets that contain non-singular supercuspidal representations of $L$.

Let $\Inn(\bG)$ be the group of inner automorphisms of $\bG$. Recall that given an algebraic group $\bG'$ over $F$, an isomorphism $\xi\colon \bG' \to \bG$ defined over $F_s$ determines a $1$-cocycle
\begin{equation}\label{eqn:iso}
z_\xi \colon  \begin{array}{ccc} \Gamma_F & \to & \Aut (\bG) \\
\gamma & \mapsto & \xi \gamma \xi^{-1} \gamma^{-1} .                 
\end{array}
\end{equation}
An \textit{inner twist} of $G$ consists of a pair $G'_\xi:=(G',\xi)$, where $G'=\bG'(F)$ for some connected reductive $F$-group $\bG'$, and $\xi\colon\bG'\isom\bG$ is an isomorphism of algebraic groups defined over $F_\sep$ such that $\text{im } (\gamma_\xi) \subset \Inn(\bG)$. Two inner twists $(G', \xi), (G'_1, \xi_1)$ of $G$ are {\em equivalent} if there exists $f \in \Inn(\bG)$ such that
\begin{equation} \label{eq:cohomologous}
z_\xi (\gamma) = f^{-1} z_{\xi_1} (\gamma) \; \gamma f \gamma^{-1}
\quad \forall \gamma \in \Gamma_F .
\end{equation}
We denote the set of equivalence classes of inner twists of $G$ by $\InnT(G)$.

An inner twist of $G$ is the same thing as an inner twist of the unique quasi-split inner form $G^* = \bG^* (F)$ of $G$.
Thus the equivalence classes of inner twists of $G$ are parametrized by the Galois cohomology group $H^1(F,\Inn(\bG^*))$.

Kottwitz \cite[Theorem 6.4]{Ko} found a natural group isomorphism
\begin{equation}
H^1(F,\Inn(\bG^*))\isom\Irr((\rZ_{G^\vee_\sconn})^{W_F}).
\end{equation}
(When F has positive characteristic, see \cite[Theorem 2.1]{Th}.) In this way every
inner twist of $G$ is associated to a unique character of $ (\rZ_{G^\vee_\sconn})^{W_F}$.

The basic version of the local Langlands conjecture states that there exists a surjective finite-to-one
map from the set of equivalence classes of irreducible admissible representations of $G$
to the set of $G^\vee$-conjugacy classes of relevant $L$-parameters $\varphi\colon W'_F\to {}^LG$.  The fiber over $\varphi$ is called an 
\textit{$L$-packet}. we denote it by $\Pi_\varphi(G)$.
Assuming the validity of the basic local Langlands conjecture, we can define for
each Langlands parameter $\varphi$ the “compound $L$-packet”  $\Pi_\varphi$ as
\begin{equation}
\Pi_\varphi:=\bigcup_{G'_\xi\in\InnT(G)}\Pi_\varphi(G'_\xi).
\end{equation}
The elements of $\Pi_\varphi$ are expected to be in bijection with $\Irr(S_\varphi)$, where $S_\varphi$ is the group introduced in \eqref{eqn:Gphi} (see \cite{Ar} and \cite[\S4.6]{Kal-global-rigid}).
\begin{theorem} \label{thm:main}
We suppose that $\bG$ splits over a tamely ramified  extension, that $p$ does not divide the order of the Weyl group of $\bG$, and that, for every Levi subgroup $L$ of $G$, the Langlands correspondence
$\cL^{\Kal}_L\colon \sigma\mapsto (\varphi_{\sigma},\rho_{\sigma})$ defined by Kaletha for the non-singular supercuspidal representations $\sigma$ of $L$ satisfies \eqref{eqn:functK}, and \eqref{eqn:fcocycles}, and that the induced maps $\cL^\fs_G$, where $\fs=[L,\sigma]_G$, satisfy \eqref{eqn:diagram}.

Let $\varphi\colon W'_F\to {}^LG$ be an $L$-parameter. Then the equivalence classes of the irreducible non-singular representations in the compound  $L$-packet $\Pi_\varphi$ are in bijection with 
$\Unip_\enh^\ord(\cG_\varphi)$. In particular, the $L$-packet $\Pi_\varphi(G^*)$ contains at least one irreducible non-singular representation. 
\end{theorem}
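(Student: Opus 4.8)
The plan is to transport the statement to the dual side using the hypothesised Langlands correspondences, to recognise the non-singular members of $\Pi_\varphi$ through Theorem~\ref{thm:Springer}, and finally to produce a non-singular representation of $G^*$ from \eqref{eqn:Springer}. Throughout, each $\pi$ in the compound packet is identified with its enhanced $L$-parameter: under the standing hypotheses, gluing the bijections $\cL^\fs_G$ of \eqref{Bernstein-block-isom-intro} over the relevant Bernstein components of $G$ and over its inner twists gives, for every $L$-parameter $\varphi$, a bijection $\pi\mapsto(\varphi,\rho_\pi)$ from $\Pi_\varphi=\bigcup_{G'_\xi\in\InnT(G)}\Pi_\varphi(G'_\xi)$ onto $\{\varphi\}\times\Irr(S_\varphi)$, compatible with the isomorphism $S_\varphi\simeq A_{\cG_\varphi}(u_\varphi)$; moreover $\pi\in\Pi_\varphi(G'_\xi)$ exactly when $\rho_\pi$ restricts, on $(\rZ_{G^\vee_\sconn})^{W_F}$, to the Kottwitz character of $G'_\xi$, so that $\Pi_\varphi(G^*)$ corresponds to the enhancements trivial on $(\rZ_{G^\vee_\sconn})^{W_F}$.

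\textbf{The two notions of non-singularity coincide.} The core step is to show that $\pi\in\Pi_\varphi$ is non-singular in the sense of Definition~\ref{defn:non-singular} if and only if $(\varphi,\rho_\pi)$ is non-singular in the sense of Definition~\ref{defn:ss}. For the forward implication I would write $\pi\in\Irr^\fs(G'_\xi)$ with $\fs=[L,\sigma]$ and $\sigma$ a non-singular supercuspidal representation of $L$. Since $p$ does not divide the order of the Weyl group of $\bG$, hence not that of $\bL$, and $\bL$ splits over a tame extension, every supercuspidal $L$-parameter for $L$ is torally wild; therefore the $L$-packets realised by Kaletha are exactly the packets of supercuspidal $L$-parameters for $L$, and these are precisely the packets containing the non-singular supercuspidals of $L$. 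In particular $\cL^{\Kal}_L(\sigma)=(\varphi_\sigma,\rho_\sigma)$ with $\varphi_\sigma$ a supercuspidal $L$-parameter, and by commutativity of \eqref{eqn:diagram} on the block $\fs$ the cuspidal support $\Sc(\varphi,\rho_\pi)$ is represented by $(L^\vee\rtimes W_F,\varphi_\sigma,\rho_\sigma)$; as $\varphi_\sigma$ is supercuspidal, $(\varphi,\rho_\pi)$ is non-singular by the reformulation of Definition~\ref{defn:ss} recorded just after it. Conversely, if $(\varphi,\rho_\pi)$ is non-singular, then $\Sc(\varphi,\rho_\pi)$ is represented by $(L^\vee\rtimes W_F,\varphi_L,\rho_L)$ with $\varphi_L$ a supercuspidal $L$-parameter; under our hypothesis $\varphi_L$ is torally wild, so $(\varphi_L,\rho_L)=\cL^{\Kal}_L(\sigma)$ for a non-singular supercuspidal $\sigma$ of $L$. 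Then $(\varphi,\rho_\pi)$ lies in the dual Bernstein component $\fs^\vee$ attached to $\fs=[L,\sigma]_{G'_\xi}$, and since $\cL^{\fs}_{G'_\xi}$ is a restriction of the bijection $\cL_{G'_\xi}$, the representation $\pi$ belongs to $\Irr^\fs(G'_\xi)$ and is therefore non-singular.

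\textbf{Conclusion.} Combining this equivalence with Theorem~\ref{thm:Springer} yields: $\pi\in\Pi_\varphi$ is non-singular if and only if $(u_\varphi,\rho_\pi)_{\cG_\varphi}\in\Unip_\enh^\ord(\cG_\varphi)$. As $\rho_\pi$ runs over $\Irr(S_\varphi)\simeq\Irr(A_{\cG_\varphi}(u_\varphi))$, the assignment $\pi\mapsto(u_\varphi,\rho_\pi)$ thus furnishes the asserted bijection between the equivalence classes of non-singular members of $\Pi_\varphi$ and $\Unip_\enh^\ord(\cG_\varphi)$. For the final assertion, note that by \eqref{eqn:Springer} the pair $(u_\varphi,\mathbf{1})$ always belongs to $\Unip_\enh^\ord(\cG_\varphi)$: the top cohomology $H^{2e(u_\varphi)}(\cB_{u_\varphi},\Qlbar)$ is the permutation module of $A_{\cG_\varphi^\circ}(u_\varphi)$ on the top-dimensional irreducible components of $\cB_{u_\varphi}$, and a permutation module always contains the trivial representation. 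The trivial enhancement $\mathbf{1}\in\Irr(S_\varphi)$ is trivial on $(\rZ_{G^\vee_\sconn})^{W_F}$; since $\varphi$ is relevant for the quasi-split form $G^*$, it corresponds to a member of $\Pi_\varphi(G^*)$, which by the equivalence just proved is non-singular.

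\textbf{Main difficulty.} I expect the delicate point to be the second paragraph: identifying the representation-theoretic notion of non-singularity (defined through the supercuspidal support) with the dual-side notion (semisimple cuspidal support). This hinges both on the commutativity of \eqref{eqn:diagram} and on the precise statement that, when $p$ does not divide the order of the Weyl group of $\bG$, Kaletha's construction realises exactly the $L$-packets of supercuspidal (equivalently, torally wild supercuspidal) parameters; granting this, what remains is bookkeeping with the Bernstein decompositions on the two sides, together with Theorem~\ref{thm:Springer} and the elementary remark \eqref{eqn:Springer}.
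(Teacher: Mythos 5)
Your proposal is correct and follows essentially the same route as the paper: transporting to the dual side via the hypothesised bijections $\cL^\fs_G$, characterising non-singularity of $(\varphi,\rho)$ through Theorem~\ref{thm:Springer} and the semisimplicity of the cuspidal support, matching supercuspidal parameters with Kaletha's non-singular supercuspidals, and invoking \eqref{eqn:Springer} for the trivial enhancement to produce a non-singular member of $\Pi_\varphi(G^*)$. Your write-up is in fact somewhat more detailed than the paper's (e.g.\ the Kottwitz-character bookkeeping for inner twists and the permutation-module argument), but the underlying argument is the same.
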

\begin{proof}
Let $\fs=[L,\sigma]_G$  non-singular. Since $\bG$ splits over a tamely ramified  extension, $\bL$ also splits over a tamely ramified  extension, and, since $p$ does not divide the order of the Weyl group of $\bG$, it does not divide the order of the Weyl group of $\bL$.
Since we have assume that $\cL^{\Kal}_L$ satisfies \eqref{eqn:functK}and \eqref{eqn:fcocycles}, it follows from Remark~\ref{rem:properties} that we have a bijection 
\[\cL^\fs_G\colon\Irr^{\fs}(G)\to\Phi_\enh^{\fs^{\vee}}(G),\] where $\fs^{\vee}=[L^{\vee},\varphi_{\sigma},\rho_{\sigma}]_{G^\vee}$, and, 
by hypothesis, the map $\cL_G^\fs$ satisfies \eqref{eqn:diagram}. 

Let $\varphi\colon W'_F\to {}^LG$ be an $L$-parameter and let $\rho\in\Irr(S_\varphi)$. Let  $(L^\vee\rtimes W_F,\varphi_{L},\rho_L)_{G^\vee}$ denote the cuspidal support of  the enhanced $L$-parameter $(\varphi,\rho)$.  By Theorem~\ref{thm:Springer},  we have $(u_\varphi,\rho)_{\cG_\varphi}\in\fU^\ord_\enh(\cG_\varphi)$ if and only if  $(\varphi,\rho)$  is non-singular. It follows from Definition~\ref{defn:ss} that $(\varphi,\rho)$  is non-singular if and only if  the $L$-parameter $\varphi_{L},$ is supercuspidal. But, by \cite{Kaletha-nonsingular}, the  supercuspidal  $L$-parameters for $L$ are exactly the ones which correspond to non-singular supercuspidal irreducible representations of $L$ by the Langlands correspondence $\cL^{\Kal}_L$. 

Since, by \eqref{eqn:Springer}, we have $(u_\varphi,1)_{\cG_\varphi}\in\fU^\ord_\enh(\cG_\varphi)$, there exists a non-singular irreducible representation $\pi$ of the quasi-split group $G^*$ such  that $\cL^\fs(\pi)=(\varphi,1)$.  
\end{proof}

\end{document}